\begin{document}

\centerline{}

\centerline{}

\centerline{\Large{\bf Tight $J$-frames in Krein space}}

\centerline{}

\centerline{\Large{\bf and the associated $J$-frame potential}}

\centerline{}

\centerline{\bf {Sk. Monowar Hossein}}

\centerline{Department of Mathematics }

\centerline{Aliah University, IIA/27 New Town, Kolkata-156}

\centerline{West Bengal, India.}

\centerline{\textit{email} : sami$\_ $milu@yahoo.co.uk}

\centerline{}

\centerline{\bf {Shibashis Karmakar}}

\centerline{Department of Mathematics }

\centerline{Jadavpur University, Jadavpur-32}

\centerline{West Bengal, India.}

\centerline{\textit{email} : shibashiskarmakar@gmail.com}

\centerline{}

\centerline{\bf {Kallol Paul}}

\centerline{Department of Mathematics }

\centerline{Jadavpur University, Jadavpur-32}

\centerline{West Bengal, India.}

\centerline{\textit{email} : kalloldada@gmail.com}

\newtheorem{theorem}{Theorem}[section]
\newtheorem{lemma}[theorem]{Lemma}
\newtheorem{proposition}[theorem]{Proposition}
\newtheorem{corollary}[theorem]{Corollary}
\theoremstyle{definition}
\newtheorem{definition}[theorem]{Definition}
\newtheorem{example}[theorem]{Example}
\newtheorem{exercise}[theorem]{Exercise}
\newtheorem{conclusion}[theorem]{Conclusion}
\newtheorem{conjecture}[theorem]{Conjecture}
\newtheorem{criterion}[theorem]{Criterion}
\newtheorem{summary}[theorem]{Summary}
\newtheorem{axiom}[theorem]{Axiom}
\newtheorem{problem}[theorem]{Problem}
\theoremstyle{remark}
\newtheorem{remark}[theorem]{Remark}
\numberwithin{equation}{section}
\centerline{}


\begin{abstract}
Motivated by the idea of $J$-frame for a Krein space $\textbf{\textit{K}}$, introduced by Giribet \textit{et al.} (J. I. Giribet, A. Maestripieri, F. Martínez Per\'{i}a,  P. G. Massey, \textit{On frames for Krein spaces}, J. Math. Anal. Appl. (1), {\bf 393} (2012),
122--137.), we introduce the notion of $\zeta-J$-tight frame for a Krein space $\textbf{\textit{K}}$. In this paper we characterize $J$-orthonormal basis for $\textbf{\textit{K}}$ in terms of $\zeta-J$-Parseval frame. We show that a Krein space is richly supplied with $\zeta-J$-Parseval frames. We also provide a necessary and sufficient condition when the linear sum of two $\zeta-J$-Parseval frames is again a $\zeta-J$-Parseval frame. We then generalize the notion of $J$-frame potential in Krein space from Hilbert space frame theory. Finally we provided a necessary and sufficient condition for a $J$-frame potential of the corresponding $\zeta-J$-tight frame to be minimum.
\end{abstract}

{\bf Mathematics Subject Classification:} 42C15, 46C05, 46C20

{\bf Keywords:} Krein Space, Anti-Hilbert space, Grammian operator, $J$-frame, Frame potential.

\section{Introduction}
Now a days Hilbert space frame theory is a very well known concept. The foundation of the theory was initiated by Duffin and Schaeffer (\cite{ds}) in the year 1952,
when they studied a class of nonharmonic Fourier series. But the building blocks were established by the landmark paper of Daubechies \textit{et al.} (\cite{dgm}).
Since then this subject attracted many mathematicians. In finite dimensional Hilbert space the characterization of finite normalized tight frames \textit{i.e.} FNTFs leads directly to
many fascinating problems in application. To study the distribution of FNTFs in space Benedetto and Fickus (\cite{jm}) developed the notion of frame potential which is analogous to potential energy of a system in Physics.

The generalization of the ``Hilbert space" results to the case of indefinite metrics is an interesting and important
problem. Therefore, it is a natural demand to introduce the concept of frames for such type of spaces like Krein space.  Krein spaces play an important role in modern analysis because of it's rich underlying theory and applications in Special Relativity, High energy physics,
Quantum cosmology, Krein space filtering and many more areas. Some works already had been done in this direction. Giribet \textit{et al.} (\cite{gmmm}) defined frames for Krein spaces which they
called as $J$-frames. It is an extension of $J$-orthonormal basis. Similarly Esmeral \textit{et al.} (\cite{koe}) considered a more direct approach. They have shown that frames for a Krein space
and the theory of frames for the associated Hilbert space are equivalent. Recently Esmeral \textit{et al.} (\cite{paef}) studied frames of subspaces of Krein space as well.
Since the definition of Giribet \textit{et al.} (\cite{gmmm}) is more geometric in nature, therefore we consider their definition for further progress in this direction.
 As for practical use we need tight/Parseval frames for finite dimensional spaces, so we are motivated to do so.
In this paper we introduce the notion of $J$-tight frame for a Krein space $\textbf{\textit{K}}$, which is a natural extension of $J$-frame for the Krein space $\textbf{\textit{K}}$.
Then we move our attention to $J$-frame potential of the Krein space $\textbf{\textit{K}}$. At first we define $J$-frame force between a positive and a negative frame vector, then develop the associated frame potential of a given $J$-frame. Finally we give a necessary and sufficient condition when the value of the $J$-frame potential corresponding to a given $J$-frame is minimum.

\section{Preliminary Notes}

\subsection{On Krein spaces}
We briefly mention the definitions, geometric interpretations and some basic properties of Krein spaces focusing on those results that we need for our study \cite{jb,isty}.
\begin{definition}
An abstract vector space $(\textbf{\textit{K}},[~,~])$ that satisfies the following requirements is called a Krein space.\\

 (i) $\textbf{\textit{K}}$ is a linear space over the field $F$, where $F$ is either $\mathbb{R}$ or $\mathbb{C}$.\\

  (ii) there exists a bilinear form $[~,~]\in{F}$ on \textbf{\textit{K}} such that\\
  $$[y,x]=\overline{[x,y]}$$
  $$[ax+by,z]=a[x,z]+b[y,z]$$
  for any $x,y,z\in{\textbf{\textit{K}}}$, $a,b\in{F}$, where $\overline{[~,~]}$ denote the complex conjugation.\\
	
  (iii) The vector space $\textbf{\textit{K}}$ admits a canonical decomposition $\textbf{\textit{K}}=\textbf{\textit{K}}^+[\dot{+}]\textbf{\textit{K}}^-$ such that $(\textbf{\textit{K}}^+,[~,~])$ and $(\textbf{\textit{K}}^-,-[~,~])$ are Hilbert spaces relative to the norms $\|x\|=[x,x]^{\frac{1}{2}}(x\in{\textbf{\textit{K}}^+})$ and $\|x\|=(-[x,x]^{\frac{1}{2}})(x\in{\textbf{\textit{K}}^-})$.
\end{definition}
Now every canonical decomposition of $\textbf{\textit{K}}$ generates two mutually complementary projectors $P_+$ and $P_-$ ($P_{+}+P_-=I$, the identity operator on $\textbf{\textit{K}}$ ) mapping $\textbf{\textit{K}}$ onto $\textbf{\textit{K}}^+$ and $\textbf{\textit{K}}^-$ respectively. Thus for any $x\in{\textbf{\textit{K}}}$, we have $P_{\pm}=x^{\pm}$, where $x^+\in{\textbf{\textit{K}}^+}$ and $x^-\in{\textbf{\textit{K}}^-}$. The projectors $P_+$ and $P_-$ are called canonical projectors.

The linear operator $J:\textbf{\textit{K}}\to{\textbf{\textit{K}}}$ defined by the formula $J=P_+-P_-$ is called the canonical symmetry of the Krein space $\textbf{\textit{K}}$. The canonical symmetry $J$ immediately generates orthogonal canonical projectors $P_{\pm}$ according to the formula $P_{\pm}=\frac{1}{2}(I\pm{J})$ and a canonical decomposition $\textbf{\textit{K}}=\textbf{\textit{K}}^+\oplus\textbf{\textit{K}}^-,~\textbf{\textit{K}}^{\pm}=P_{\pm}\textbf{\textit{K}}$ and also the $J$-metric defined by the formula $[x,y]_{J}=[x,Jy]$, where $x,y\in{\textbf{\textit{K}}}$. The vector space $\textbf{\textit{K}}$ associated with the $J$-metric is a Hilbert space, called the associated Hilbert space of the Krein space $\textbf{\textit{K}}$.

\subsection{Basics of Hilbert Space Frame Theory }
A family of vectors $\{f_i\}_{i\in I}$ is said to be a frame for a Hilbert space $H$, if there exists positive real numbers $A$ and $B$ with $A \leq B$ such that
\begin{equation}
 A \| f \|^2 \leq \sum_{i \in I} |\langle f,f_i \rangle|^2  \leq B \| f \|^2
\end{equation}
for all $f \in H$. $A,B$ are known as lower and upper frame bounds respectively for the frame. If $A = B$ then the frame is known as $A$-tight frame and if $A=B=1$ then the frame
 is known as Parseval frame.\\
 Let $\{f_i\}_{i\in I}$ be a frame for the Hilbert space $H$ and $\{e_i\}_{i\in I}$ be the natural orthonormal basis of $\ell_2(I)$. An operator $ T:H \to \ell_2 (I)$ defined by
 $T(f)=\sum_{i \in I} \langle f,f_i \rangle e_i$ for all $f\in H$ is  known as analysis operator and its adjoint operator defined by
 $ T^* (e_i)=f_i $ is known as synthesis operator for the frame $\{f_i\}_{i\in I}$. The operator $ S (=TT^*):H \to H $ given by
 $ S(f)=\sum_{i \in I} \langle f,f_i \rangle f_i$ for all $f\in H$ is called frame operator. A mapping $G:\ell_2(I)\to\ell_2(I)$ defined as $G = T^* T$ is known as the Grammian operator.
 It is clear that $S$ is self-adjoint, positive and invertible operator and
 $A.I \leq S \leq B.I$

A normalized tight frame (NTF) is a tight frame $\{f_i\}_{i \in I}$ with $\|f_i\| = 1$ for all $ i \in I$. A finite NTF, denoted by FNTF possess a significant structure. We will mention some of the results on FNTF of $N$ elements for a $d$-dimensional Hilbert space $H = F^d $.
\begin{theorem} \cite{jm}
If $\{x_n\}^N _{n=1}$ is an A-FNTF for a $d$-dimensional Hilbert space H, then $ A = \frac{N}{d} $.
\end{theorem}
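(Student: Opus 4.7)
The plan is to compute the trace of the frame operator $S$ in two different ways and equate the results. Since $\{x_n\}_{n=1}^N$ is an $A$-tight frame, the frame operator satisfies $S = A \cdot I_H$, so its trace is simply $A d$ because $H$ is $d$-dimensional. On the other hand, I would compute the trace using the rank-one decomposition $S f = \sum_{n=1}^N \langle f, x_n \rangle x_n$, which expresses $S$ as $\sum_{n=1}^N x_n \otimes x_n$, giving $\operatorname{tr}(S) = \sum_{n=1}^N \|x_n\|^2$.

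Next, I would invoke the normalization hypothesis $\|x_n\| = 1$ for each $n$, which turns the sum into $N$. Equating the two expressions yields $A d = N$, hence $A = N/d$.

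The only step that deserves care is the trace identity $\operatorname{tr}(S) = \sum_n \|x_n\|^2$. I would justify it either by picking any orthonormal basis $\{e_k\}_{k=1}^d$ of $H$ and expanding
\begin{equation*}
\operatorname{tr}(S) = \sum_{k=1}^d \langle S e_k, e_k \rangle = \sum_{k=1}^d \sum_{n=1}^N |\langle e_k, x_n \rangle|^2 = \sum_{n=1}^N \|x_n\|^2,
\end{equation*}
where the last equality uses Parseval's identity in $H$, or by observing directly that the trace of a rank-one operator $x \otimes x$ equals $\|x\|^2$ and then using linearity of the trace. There is no genuine obstacle here; the argument is a one-line trace computation once the frame operator is identified with the scalar multiple of the identity.
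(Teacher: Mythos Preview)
Your argument is correct and is in fact the standard proof of this result. Note, however, that the paper does not supply its own proof of this theorem: it is quoted from \cite{jm} as background in the preliminaries section, so there is nothing in the paper to compare your proposal against. Your trace computation---evaluating $\operatorname{tr}(S)$ once as $Ad$ from $S=A\cdot I_H$ and once as $\sum_{n=1}^N\|x_n\|^2=N$ from the rank-one decomposition---is exactly the argument one finds in the cited source, and the justification you give for the trace identity via an orthonormal basis and Parseval is complete.
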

\begin{theorem}\cite{gkk}
 Given any $d$ and $ N \geq d$, then there exists a FNTF for $F^d$ of $N$ elements.
\end{theorem}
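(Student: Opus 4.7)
The plan is to prove the theorem by a Naimark-type dilation: I will exhibit an $N\times d$ matrix $M$ with orthonormal columns whose rows are all of the same Euclidean norm, and then take $\{x_n\}_{n=1}^N$ to be a suitable rescaling of those rows. Orthonormality of the columns will supply the tight frame identity for vectors in $F^d$, while the constancy of the row norms will yield $\|x_n\|=1$. Combined with Theorem~2.2 (which forces the frame bound to be $A=N/d$), the family produced in this way must be an FNTF.

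Concretely, in the complex case $F=\mathbb{C}$ I would start from the discrete Fourier transform matrix $U_{jk}=N^{-1/2}\,\omega^{(j-1)(k-1)}$, $\omega=e^{2\pi i/N}$, which is unitary and whose entries all have the same modulus $|U_{jk}|^2=1/N$. Let $M$ consist of the first $d$ columns of $U$, let $\{e_k\}_{k=1}^d$ be the standard basis of $\mathbb{C}^d$, and set $x_n=\sqrt{N/d}\sum_{k=1}^d U_{nk}\,e_k$. A direct calculation gives $\|x_n\|^2=(N/d)\sum_{k=1}^d|U_{nk}|^2=1$, and for any $f=\sum_k f_k e_k\in\mathbb{C}^d$ orthonormality of the first $d$ columns of $U$ yields
\[
\sum_{n=1}^N|\langle f,x_n\rangle|^2 \;=\; \frac{N}{d}\sum_{k=1}^d|f_k|^2 \;=\; \frac{N}{d}\|f\|^2,
\]
so $\{x_n\}_{n=1}^N$ is the required FNTF.

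The main obstacle is the real case $F=\mathbb{R}$, for which no DFT with constant-modulus entries exists in all dimensions and one must argue abstractly. Here I would invoke the Schur--Horn theorem: the constant diagonal vector $(d/N,\ldots,d/N)\in\mathbb{R}^N$ is majorized by the spectrum $(\underbrace{1,\ldots,1}_{d},\underbrace{0,\ldots,0}_{N-d})$ of a rank-$d$ orthogonal projection (the majorization check is immediate since both vectors have total mass $d$), so there exists a real symmetric rank-$d$ orthogonal projection $P$ on $\mathbb{R}^N$ with every diagonal entry equal to $d/N$. Setting $y_n=\sqrt{N/d}\,Pe_n$ and using $P^2=P$, one checks $\|y_n\|^2=(N/d)\langle Pe_n,e_n\rangle=1$ and $\sum_n|\langle f,y_n\rangle|^2=(N/d)\|Pf\|^2=(N/d)\|f\|^2$ for every $f\in\mathrm{range}(P)\cong\mathbb{R}^d$. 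Since the Schur--Horn step goes through verbatim in the complex setting, this abstract argument actually subsumes both cases uniformly in $F$; the DFT construction is kept only for the explicit formula it provides when $F=\mathbb{C}$.
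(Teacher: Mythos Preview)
The paper does not supply a proof of this statement: it is quoted as a preliminary result from \cite{gkk} (Goyal--Kova\v{c}evi\'c--Kelner) and is used only as background in Section~2, so there is no in-paper argument to compare against.

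That said, your proposal is correct and is in the spirit of the cited source. The DFT (``harmonic frame'') construction you give for $F=\mathbb{C}$ is exactly the classical one, and the computations check out: constant-modulus entries force $\|x_n\|=1$, and orthonormality of the retained columns gives the tight frame identity with bound $N/d$. For $F=\mathbb{R}$ your Schur--Horn argument is a clean and standard way to manufacture a real rank-$d$ orthogonal projection with constant diagonal $d/N$; the majorization $(d/N,\dots,d/N)\prec(1,\dots,1,0,\dots,0)$ is immediate, and a real symmetric matrix with spectrum $\{0,1\}$ is automatically idempotent, so $P^2=P$ follows. The identification $\mathrm{range}(P)\cong\mathbb{R}^d$ via any linear isometry then transports $\{y_n\}$ to an FNTF for $\mathbb{R}^d$. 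Your remark that the Schur--Horn route already covers both fields is also correct; the explicit DFT formula is a bonus, not a logical necessity.
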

\begin{theorem}\cite{gkk}
 Normalized tight frames for $\mathbb{R}^2$ of $N$ elements correspond to sequences $\{z_n\}^N _{n=1} \subseteq \mathbb{C}$ with $|z_n| = 1$ for all n and for which $\sum_{n=1} ^N z_n ^2 = 0.$
\end{theorem}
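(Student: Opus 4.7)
The plan is to exploit the identification $\mathbb{R}^2 \cong \mathbb{C}$ via $(a,b)\leftrightarrow a+ib$. Since the frame vectors $x_n$ of a normalized tight frame are unit vectors in $\mathbb{R}^2$, each one can be written in polar form as $x_n = (\cos\theta_n,\sin\theta_n)$, which corresponds under the above identification to the unimodular complex number $z_n = e^{i\theta_n}$. This immediately sets up a bijection between sequences of unit vectors in $\mathbb{R}^2$ of length $N$ and sequences $\{z_n\}_{n=1}^N\subseteq\mathbb{C}$ with $|z_n|=1$; what I have to prove is that the tight-frame condition on $\{x_n\}$ translates exactly into $\sum_{n=1}^{N}z_n^2=0$.

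First I would apply Theorem 2.2: for any FNTF of $N$ elements in $\mathbb{R}^2$ the frame bound is forced to be $A=N/2$, so the frame operator $S=\sum_{n=1}^{N} x_n x_n^{T}$ must equal $\tfrac{N}{2}I_2$. Then I would compute $x_n x_n^{T}$ explicitly in the polar form and apply the double-angle identities $\cos^2\theta_n=\tfrac{1+\cos 2\theta_n}{2}$, $\sin^2\theta_n=\tfrac{1-\cos 2\theta_n}{2}$, $\cos\theta_n\sin\theta_n=\tfrac{\sin 2\theta_n}{2}$ to write
\[
S \;=\; \frac{N}{2}I_2 \;+\; \frac{1}{2}\begin{pmatrix}\sum_{n=1}^{N}\cos 2\theta_n & \sum_{n=1}^{N}\sin 2\theta_n \\ \sum_{n=1}^{N}\sin 2\theta_n & -\sum_{n=1}^{N}\cos 2\theta_n\end{pmatrix}.
\]
Setting this equal to $\tfrac{N}{2}I_2$ is then equivalent to $\sum_{n}\cos 2\theta_n = 0$ and $\sum_{n}\sin 2\theta_n = 0$, which combines into $\sum_{n=1}^{N} e^{2i\theta_n}=0$, i.e. $\sum_{n=1}^{N} z_n^2 = 0$.

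For the converse direction I would run exactly the same computation in reverse: starting from any unimodular sequence with $\sum z_n^2 = 0$, one reconstructs $\{x_n\}$ by $x_n=(\mathrm{Re}\,z_n,\mathrm{Im}\,z_n)$, each $x_n$ is a unit vector, and the matrix identity above forces $S=\tfrac{N}{2}I_2$, giving the Parseval/tight inequality with $A=B=N/2$.

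The proof is mostly bookkeeping, so there is no genuine obstacle; the only care needed is verifying that Theorem 2.2 pins down the exact constant $A=N/2$ (otherwise one would have to separately eliminate the possibility of different tight bounds), and ensuring that the correspondence $x_n\leftrightarrow z_n$ is actually a bijection at the level of sequences (not sets), so that reorderings and repetitions of the frame vectors match reorderings and repetitions of the $z_n$'s. Once these two observations are in place, the double-angle computation finishes the theorem in a few lines.
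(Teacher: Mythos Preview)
Your argument is correct and is essentially the standard proof of this characterization. However, there is nothing to compare it against: the paper does not supply its own proof of this statement. Theorem~2.4 appears in the preliminaries as a quoted result from \cite{gkk}, stated without proof, and the surrounding text only uses it to remark that the $N$th roots of unity form an FNTF for $\mathbb{R}^2$. So your write-up stands on its own rather than as a reconstruction of anything in the paper.
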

The above theorem state that for $N>2$ the $N$ vectors which are $N$th roots of unity form a FNTF for $\mathbb{R}^2$. So it provides a technique to find out FNTFs for $\mathbb{R}^2$.
Let us consider $d$-dimensional real Hilbert space $\mathbb{R}^d$ and $S^{d-1}$ be the unit sphere in $\mathbb{R}^d$. Then frame force \cite{jm} between any two ponts on $S^{d-1}$ is a function
\begin{equation}
 FF: S^{d-1} \times S^{d-1} \to \mathbb{R}^d, \mbox{ defined by } FF(a,b) = \langle a,b \rangle (a-b).
\end{equation}
As we know that a central force $FF$  corresponds to a real valued continuous function $f$ on $ (0,2] $ such that $F(a,b)=f(\|a-b\|)(a-b)~\textmd{for all}~a,b\in S^{d-1}$, we will always have a function $f$ such that $ FF(a,b)=f(\|a-b\|)(a-b)$. \\
Let $D= \{(x,x) : x\in S^{d-1}\}$, then for a given central force $FF$, the potential \cite{jm} corresponding to $FF$ is a function $P:(S^{d-1} \times S^{d-1})\backslash{D} \to \mathbb{R}$, defined by $P(a,b) = p(\|a-b\|)$, where $p:(0,2] \to R $ satisfies $p'(x) = -xf(x)$.
\begin{definition}
The frame potential in $F^d$ is a function $FP: (S^{d-1})^N \to [0,\infty)$, defined
as $ FP( \{y_n\}_{n=1} ^N) = \sum_{i=1} ^ N \sum_{j=1} ^ N |\langle y_i,y_j \rangle|^2 $, where $S^{d-1} = \{ x\in F^d : \|x\|=1\}$.
\end{definition}
Note that difference of frame potential is the work done to transform one sequence into other.
\begin{theorem}\cite{jm}
 Let $\{y_n\}_{n=1} ^N $ be a sequence in $F^d$ with associated frame operator $S$. Then $ FP(\{y_n\}_{n=1} ^N) = Tr(S^2).$
\end{theorem}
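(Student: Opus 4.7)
The plan is to establish the identity by a direct computation of $\mathrm{Tr}(S^{2})$ against a fixed orthonormal basis of $F^{d}$, expanding $S^{2}$ through the frame formula and collapsing the resulting inner sum via Parseval's identity. As a cross-check, I would also sketch an alternative route through the Grammian.

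First, I would iterate the defining formula $S(f)=\sum_{n=1}^{N}\langle f,y_n\rangle y_n$ to obtain
\begin{equation*}
S^{2}(f)=\sum_{m,n=1}^{N}\langle y_n,y_m\rangle\,\langle f,y_n\rangle\,y_m.
\end{equation*}
Fixing any orthonormal basis $\{e_k\}_{k=1}^{d}$ of $F^{d}$ and using $\mathrm{Tr}(S^{2})=\sum_{k=1}^{d}\langle S^{2}(e_k),e_k\rangle$, I would substitute and interchange the two finite sums to get
\begin{equation*}
\mathrm{Tr}(S^{2})=\sum_{m,n=1}^{N}\langle y_n,y_m\rangle\Bigl(\sum_{k=1}^{d}\langle e_k,y_n\rangle\langle y_m,e_k\rangle\Bigr).
\end{equation*}
The bracketed expression is exactly Parseval's identity and equals $\langle y_m,y_n\rangle$, so
\begin{equation*}
\mathrm{Tr}(S^{2})=\sum_{m,n=1}^{N}\langle y_n,y_m\rangle\langle y_m,y_n\rangle=\sum_{m,n=1}^{N}|\langle y_m,y_n\rangle|^{2}=FP(\{y_n\}_{n=1}^{N}).
\end{equation*}

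For a second viewpoint, I would note that with the analysis operator $T$ one has $S=T^{*}T$, so by the cyclic property of the trace $\mathrm{Tr}(S^{2})=\mathrm{Tr}((TT^{*})^{2})=\mathrm{Tr}(G^{2})$ where $G=TT^{*}$ is the Grammian. Since $G_{ij}=\langle y_j,y_i\rangle$, a single matrix product gives $(G^{2})_{ii}=\sum_{j=1}^{N}|\langle y_i,y_j\rangle|^{2}$, and summing over $i$ recovers the same expression for $FP$.

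There is essentially no substantial obstacle; the whole argument is one line of expansion plus Parseval. The only care needed lies in the index bookkeeping of the double sum and in the correct placement of complex conjugates when invoking Parseval's identity, which is why I would prefer to write the computation out explicitly before collapsing it.
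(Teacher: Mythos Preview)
Your argument is correct. Note, however, that the paper does not supply its own proof of this statement: it is quoted as a preliminary result from Benedetto--Fickus \cite{jm}, so there is no in-paper proof to compare against directly. That said, when the authors later invoke this identity inside the proof of their main $J$-frame potential theorem, they take exactly your second route --- they write the double sum as $\|G\|_F^{2}=\mathrm{tr}(G^{2})=\mathrm{tr}(S^{2})$ via the Grammian and the Frobenius norm, rather than expanding $S^{2}$ against an orthonormal basis. So your alternative sketch is the one that actually aligns with how the paper uses the result; your primary Parseval computation is a slightly more explicit but equivalent derivation.
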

The main results on frame potential for Hilbert space are
\begin{theorem}\cite{jm}
 If $N \leq d$, the minimum value of the frame potential is $N$, and the minimizers are precisely the orthonormal  sequences in $F^d$.
\end{theorem}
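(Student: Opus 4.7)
The plan is to reduce the potential to a spectral quantity and then optimize over the eigenvalues subject to two scalar constraints: the trace and the rank. By the theorem just cited, for any unit-norm sequence $\{y_n\}_{n=1}^N \subset S^{d-1}$ with frame operator $S$, we have $FP(\{y_n\}_{n=1}^N) = \operatorname{Tr}(S^2) = \sum_{i=1}^d \lambda_i^2$, where $\lambda_1 \geq \cdots \geq \lambda_d \geq 0$ are the eigenvalues of the positive self-adjoint operator $S$. I would begin by computing $\operatorname{Tr}(S) = \sum_n \langle S y_n^{(\text{any onb})}, \cdot \rangle = \sum_n \|y_n\|^2 = N$, so $\sum_i \lambda_i = N$.

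Next, since $S = \sum_{n=1}^N y_n \langle \cdot, y_n \rangle$ is a sum of $N$ rank-one operators, $\operatorname{rank}(S) \leq N$, hence at most $N$ of the eigenvalues $\lambda_i$ are nonzero. Say $k \leq N$ of them are nonzero; applying Cauchy--Schwarz to the nonzero eigenvalues gives
\begin{equation*}
N^2 = \Bigl(\sum_{i:\lambda_i>0} \lambda_i\Bigr)^2 \leq k \sum_{i:\lambda_i>0} \lambda_i^2 \leq N \sum_{i=1}^d \lambda_i^2 = N \cdot FP(\{y_n\}_{n=1}^N),
\end{equation*}
so $FP(\{y_n\}_{n=1}^N) \geq N$, which establishes the lower bound.

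For the characterization of minimizers, I would analyze the equality case. Equality in the Cauchy--Schwarz step forces all nonzero $\lambda_i$ to coincide; combined with $k \leq N$ and the constraint $\sum \lambda_i = N$ for equality in the second inequality, this forces $k = N$ and $\lambda_1 = \cdots = \lambda_N = 1$, $\lambda_{N+1} = \cdots = \lambda_d = 0$. Consequently $S$ is the orthogonal projection onto an $N$-dimensional subspace and $\operatorname{Tr}(S^2) = \operatorname{Tr}(S) = N$. Writing
\begin{equation*}
N = \sum_{i,j=1}^N |\langle y_i, y_j\rangle|^2 = \sum_{i=1}^N \|y_i\|^4 + \sum_{i\neq j} |\langle y_i,y_j\rangle|^2 = N + \sum_{i\neq j} |\langle y_i,y_j\rangle|^2
\end{equation*}
(using $\|y_i\|=1$) forces $\langle y_i, y_j\rangle = 0$ for all $i \neq j$, so $\{y_n\}_{n=1}^N$ is orthonormal. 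The converse is a direct computation: an orthonormal sequence yields $FP = \sum_{i,j} \delta_{ij} = N$.

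The main technical point is the chain of inequalities in the Cauchy--Schwarz step, where one must be careful that the bound $k \leq N$ is applied in a way that preserves both the value and the equality condition; the rest is essentially bookkeeping, as the spectral reduction $FP = \operatorname{Tr}(S^2)$ does all the heavy lifting and reduces a combinatorial-looking inequality about inner products to a one-line convex optimization.
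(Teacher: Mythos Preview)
Your argument is correct, but note that the paper does not actually supply a proof of this statement: it is quoted from Benedetto--Fickus \cite{jm} as background in Section~2, so there is no in-paper proof to compare against.

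That said, your spectral route works, though for the regime $N\le d$ it is more machinery than necessary. Your final display already contains a self-contained proof: directly from the definition,
\[
FP(\{y_n\}_{n=1}^N)=\sum_{i,j=1}^N |\langle y_i,y_j\rangle|^2
=\sum_{i=1}^N \|y_i\|^4+\sum_{i\neq j}|\langle y_i,y_j\rangle|^2
=N+\sum_{i\neq j}|\langle y_i,y_j\rangle|^2\ \ge\ N,
\]
with equality precisely when every off-diagonal inner product vanishes, i.e.\ when $\{y_n\}$ is orthonormal. This single line delivers both the lower bound and the characterization of minimizers without invoking $\operatorname{Tr}(S^2)$, eigenvalues, or the rank estimate; the hypothesis $N\le d$ is used only to ensure that orthonormal $N$-tuples exist in $F^d$, so the bound is attained. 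Your Cauchy--Schwarz/rank argument is exactly the right tool for the companion case $N\ge d$ (Theorem~2.8), where the diagonal trick fails and one genuinely needs the spectral picture to see FNTFs as minimizers, but here it is a detour that you then undo by reverting to the direct computation anyway.

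A minor notational point: the expression ``$\operatorname{Tr}(S)=\sum_n \langle S y_n^{(\text{any onb})},\cdot\rangle$'' is garbled; you presumably mean $\operatorname{Tr}(S)=\sum_k \langle S e_k,e_k\rangle=\sum_n\|y_n\|^2=N$ for any orthonormal basis $\{e_k\}$ of $F^d$.
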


\begin{theorem}\cite{jm}
 If $N \geq d$, the minimum value of the frame potential is $\frac{N^2}{d}$, and the minimizers are precisely the FNTFs for $F^d$.
\end{theorem}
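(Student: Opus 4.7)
The plan is to leverage the identity $FP(\{y_n\}_{n=1}^N) = \mathrm{Tr}(S^2)$ supplied by Theorem~2.5 and reduce the problem to a constrained minimization over the spectrum of the frame operator $S$. Since $S$ is a positive self-adjoint operator on a $d$-dimensional space, write its eigenvalues as $\lambda_1, \ldots, \lambda_d \geq 0$. Then $\mathrm{Tr}(S^2) = \sum_{i=1}^d \lambda_i^2$, and the unit-norm constraint $\|y_n\|=1$ forces
\begin{equation*}
\sum_{i=1}^d \lambda_i \;=\; \mathrm{Tr}(S) \;=\; \sum_{n=1}^N \langle y_n, y_n\rangle \;=\; N.
\end{equation*}
So the problem is essentially: minimize $\sum \lambda_i^2$ subject to $\sum \lambda_i = N$ and $\lambda_i \geq 0$.

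The first key step is the Cauchy--Schwarz (equivalently, power-mean) inequality applied to the vector $(\lambda_1,\ldots,\lambda_d)$ against $(1,\ldots,1)$:
\begin{equation*}
N^2 \;=\; \Bigl(\sum_{i=1}^d \lambda_i\Bigr)^{2} \;\leq\; d \sum_{i=1}^d \lambda_i^2,
\end{equation*}
which yields $FP \geq N^2/d$. The equality case occurs exactly when all the $\lambda_i$ coincide, that is, when $\lambda_i = N/d$ for every $i$. This is equivalent to $S = \tfrac{N}{d} I$, which is precisely the defining property of an $\tfrac{N}{d}$-tight frame with unit-norm vectors; by Theorem~2.2 there is no other admissible tight constant, so the characterization pins down FNTFs exactly.

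The second key step is to verify that the lower bound is \emph{achieved}, so that it is genuinely the minimum. For $N \geq d$ this is immediate from Theorem~2.3, which guarantees existence of an FNTF with $N$ elements in $F^d$; plugging such a frame into $FP$ and using Theorem~2.5 with $S = (N/d)I$ gives $FP = \mathrm{Tr}((N/d)^2 I) = N^2/d$. Conversely, any minimizer $\{y_n\}$ has frame operator $S$ whose eigenvalues saturate Cauchy--Schwarz, hence $S=(N/d)I$, and since the $y_n$ are unit vectors this is by definition an FNTF.

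The part I would expect to require most care is the equality clause: one must argue both that equality in Cauchy--Schwarz forces the spectrum to be constant, and that a constant spectrum of a self-adjoint operator on a $d$-dimensional space forces $S = (N/d) I$ (not merely similar to it). The latter is a spectral-theoretic triviality for normal operators, but it is where the "precisely the FNTFs" characterization is earned. Everything else is bookkeeping built on top of Theorems~2.2, 2.3 and 2.5.
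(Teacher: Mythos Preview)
Your proof is correct. Note, however, that the paper does not supply its own proof of this statement: Theorem~2.8 is quoted as a preliminary result from Benedetto--Fickus \cite{jm}, so there is no in-text argument to compare against directly.

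That said, the paper does prove a Krein-space analogue (Theorem~3.14), and there the constrained minimization of $\sum_i \lambda_i^2$ subject to $\sum_i \lambda_i$ fixed is handled via Lagrange multipliers rather than Cauchy--Schwarz. Your route is more elementary: Cauchy--Schwarz delivers both the lower bound and its equality case in one stroke, with no calculus required, whereas the Lagrange-multiplier argument sets up and solves $\nabla f = \lambda \nabla g$ to locate the critical configuration. Both methods identify the same minimizing spectrum $\lambda_i = N/d$ and both must then invoke existence of FNTFs (your appeal to Theorem~2.3) to confirm the infimum is actually attained. Your argument is the cleaner one for the pure Hilbert-space statement; the paper's Lagrange-multiplier framing has the minor advantage of splitting naturally into the two independent subproblems on $M_+$ and $M_-$ that arise in the Krein setting of Theorem~3.14.
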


\section{Main Results}
\subsection{Definition of $J$-frames in Krein Spaces}
Let $(\textbf{\textit{K}},[~.~],J)$ be a Krein space. Suppose $\textbf{\textit{F}}=\{f_n:n\in{\mathbb{N}}\}$ is a Bessel sequence of $\textbf{\textit{K}}$ and $T\in{L(\ell^2(I),\textbf{\textit{K}})}~(\ell^2(I):=\{(c_i):\sum_{i\in{I}}|c_i|^2<\infty\})$ is the synthesis operator for the Bessel sequence $\textbf{\textit{F}}$. Let $I_+=\{i\in{I}:[f_i,f_i]\geq{0}\}$ and $I_-=\{i\in{I}:[f_i,f_i]<0\}$, then $\ell^2(I)=\ell^2(I_+)\bigoplus{\ell^2(I_-)}$. Also let $P_{\pm}$ denote the orthogonal projection of $\ell^2(I)$ onto $\ell^2(I_{\pm})$. Let $T_{\pm}=TP_{\pm}$, $M_{\pm}=\overline{span}\{ f_i:i\in{I_{\pm}}\}$ then we have $R(T)=R(T_+)+R(T_-)$, where $R(T)$ represents range of the operator $T$.
\begin{definition}\cite{gmmm}
 A Bessel sequence $\textbf{\textit{F}}$ is said to be a $J$-frame for $\textbf{\textit{K}}$ if $R(T_+)$ is a maximal uniformly $J$-positive subspace of $\textbf{\textit{K}}$ and $R(T_-)$ is a maximal uniformly $J$-negative subspace of $\textbf{\textit{K}}$.
\end{definition}

Now every $J$-frame is associated with a positive real numbers $\zeta$, where $\zeta=c_0(M_+,\verb"C")+c_0(M_-,\verb"C")$. Let $G_M$ be the Grammian operator of $M$ \cite{gmmm} and $ \gamma(T)$ is the reduced minimum modulus of an operator $T$ \cite{gmmm}. Then, $c_0(M_+,\verb"C")=\frac{1}{\sqrt2}(\sqrt{\frac{1+\alpha^+}{2}}+\sqrt{\frac{1-\alpha^+}{2}})$ and
$c_0(M_-,\verb"C")=\frac{1}{\sqrt2}(\sqrt{\frac{1+\beta^+}{2}}+\sqrt{\frac{1-\beta^+}{2}})$, where $\alpha^+=\gamma({G_{M_+}})$ and $\beta^+=\gamma({G_{M_-}})$ and $\verb"C"=\{n\in{\textbf{\textit{K}}}:[n,n]=0\}$. \\
We also have $\zeta~\in[\sqrt{2},2)$. We will use the real number $\zeta$ (associated with a $J$-frame for $\textbf{\textit{K}}$) extensively in our work and instead of the term $J$-frame for $\textbf{\textit{K}}$ we will use the term $\zeta-J$-frame for $\textbf{\textit{K}}$.
\begin{definition}
Let $(\textbf{\textit{K}},[~.~],J)$ be a Krein space and $\textbf{\textit{F}}=\{f_n:n\in{\mathbb{N}}\}$ be a $J$-frame for the Krein space $\textbf{\textit{K}}$. Then $\textbf{\textit{F}}$ is said to be a $\zeta-J$-tight frame iff
\begin{equation}
A_{\pm}[f,f]=\sum_{i\in{I_{\pm}}}|[f,f_i]|^2,~~~~\textmd{ for all}{~f\in{M_{\pm}}}
\end{equation}
Moreover, $\textbf{\textit{F}}$ is said to be a $\zeta-J$-Parseval frame if it is a $\zeta-J$-tight frame for the Krein space $\textbf{\textit{K}}$ and in addition $A_{\pm}={\pm}1$.
\end{definition}
\begin{definition}
Let $\textbf{\textit{F}}=\{f_n:n\in{I}\}$ is a $\zeta-J$-frame for the Krein space $\textbf{\textit{K}}$. Then $\textbf{\textit{F}}$ is said to be a normalized $\zeta-J$-frame if $\|f_i\|_J=1$.
\end{definition}
\begin{definition}
Let $S_\textbf{\textit{K}}=\{x\in{\textbf{\textit{K}}}:\|x\|_J=1\}$, $S_{M_+}=\{x\in{M_+}:[x,x]=1\}$ and $S_{M_-}=\{x\in{M_-}:[x,x]=-1\}$. Then a $\zeta-J$-frame $\textbf{\textit{F}}$ is said to be weakly normalized if $\{f_i:i\in{I_+}\}\subset{S_{M_+}}$ and $\{f_i:i\in{I_-}\}\subset{S_{M_-}}$.
\end{definition}
\begin{example}
Consider the Vector space $\mathbb{R}^3$ over $\mathbb{R}$. Let us define a inner product on $\mathbb{R}^3$ by $[x,y]=x_1 y_1 + x_2 y_2-x_3 y_3 $ when $x=(x_1,x_2,x_3), y=(y_1,y_2,y_3)$ and $ x_i, y_i\in\mathbb{R}$ for $i = 1,2,3$. Consider the vectors $\{\frac{\sqrt2}{\sqrt3}(-\frac{\sqrt3}{2},-\frac{1}{2},0),\\
\frac{\sqrt2}{\sqrt3}(\frac{\sqrt3}{2},-\frac{1}{2},0),\frac{\sqrt2}{\sqrt3}(0,1,0),(\frac{1}{\sqrt2},0,\frac{\sqrt3}{\sqrt2})\}$. It is a $J$-frame for the above Krein space $\mathbb{R}^3$. It is also a $J$-Parseval frame with $\zeta=\frac{3}{2\sqrt2}+\frac{\sqrt3}{2\sqrt2}$. By numerical calculation we have $\alpha^+=1$ and $\beta^+=\frac{1}{2}$.
\end{example}

\subsection{Some results on $J$-tight frames}
 Consider $\sqrt{2}-J$-Parseval frames for a Krein space $\textbf{\textit{K}}$ which is also weakly normalized. Our next theorem will establish a relation between $\sqrt{2}-J$-weakly normalized Parseval frame and $J$-orthonormal basis for the given Krein space.
\begin{theorem}
Let $\textbf{\textit{K}}$ be a Krein space of dimension $N$. Then any $\zeta-J$-frame is a $J$-orthonormal basis for $\textbf{\textit{K}}$ iff it is a $\sqrt{2}-J$-weakly normalized Parseval frame for $\textbf{\textit{K}}$.
\end{theorem}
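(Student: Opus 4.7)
\medskip

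My plan is to argue each direction separately, exploiting the extremal nature of the value $\zeta=\sqrt{2}$ (which is the lower endpoint of the interval $[\sqrt{2},2)$ established just before the theorem) to force a canonical orthogonal decomposition $\textbf{\textit{K}}=M_+[\dot{+}]M_-$.

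For the easy direction, suppose $\textbf{\textit{F}}=\{f_i\}$ is a $J$-orthonormal basis. Then $[f_i,f_i]=\pm1$, so the sign pattern partitions the index set into $I_+$ and $I_-$ and the frame is weakly normalized. Moreover $M_+=\overline{\mathrm{span}}\{f_i:i\in I_+\}$ is automatically a Hilbert space under $[\,\cdot\,,\cdot\,]$ with $\{f_i\}_{i\in I_+}$ as an orthonormal basis (and dually for $M_-$), giving the Parseval identities $\pm[f,f]=\sum_{i\in I_\pm}|[f,f_i]|^2$ on $M_\pm$. Since $[f_i,f_j]=0$ whenever $i\in I_+$, $j\in I_-$, the subspaces $M_+$ and $M_-$ are $J$-orthogonal, forming a canonical decomposition of $\textbf{\textit{K}}$; hence $\alpha^+=\beta^+=\gamma(G_{M_+})=\gamma(G_{M_-})=1$, so that the formulas for $c_0(M_\pm,\verb"C")$ recorded before the definition of $\zeta-J$-frame give $c_0(M_+,\verb"C")=c_0(M_-,\verb"C")=1/\sqrt{2}$ and therefore $\zeta=\sqrt{2}$.

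For the converse, assume $\textbf{\textit{F}}$ is a $\sqrt{2}-J$-weakly normalized Parseval frame. The first step is to extract orthogonality within $I_+$ and within $I_-$: for any fixed $j\in I_+$, the Parseval identity on $M_+$ applied to $f_j$ gives
\begin{equation*}
1=[f_j,f_j]=\sum_{i\in I_+}|[f_j,f_i]|^2=|[f_j,f_j]|^2+\sum_{\substack{i\in I_+\\ i\neq j}}|[f_j,f_i]|^2=1+\sum_{\substack{i\in I_+\\ i\neq j}}|[f_j,f_i]|^2,
\end{equation*}
which forces $[f_j,f_i]=0$ for all $i\neq j$ in $I_+$. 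The identical argument on $M_-$ yields $[f_j,f_i]=0$ for $i\neq j$ in $I_-$. Combined with weak normalization, this makes $\{f_i\}_{i\in I_+}$ a $J$-orthonormal set spanning $M_+$ and similarly for $I_-$.

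The main (and only nontrivial) obstacle is the cross-orthogonality $[f_i,f_j]=0$ for $i\in I_+$, $j\in I_-$. This is precisely where the hypothesis $\zeta=\sqrt{2}$ is used. Since $c_0(M_\pm,\verb"C")\ge 1/\sqrt{2}$ with equality if and only if $\alpha^+=1$ respectively $\beta^+=1$ (a direct consequence of the strict monotonicity of $x\mapsto \sqrt{(1+x)/2}+\sqrt{(1-x)/2}$ on $[0,1]$), the condition $\zeta=\sqrt{2}$ is equivalent to $\gamma(G_{M_+})=\gamma(G_{M_-})=1$. This forces $M_+$ and $M_-$ to be the components of a canonical decomposition of $\textbf{\textit{K}}$, i.e.\ $M_+[\perp]M_-$ with $\textbf{\textit{K}}=M_+[\dot{+}]M_-$, which gives the required cross-orthogonality for free. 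Putting the three orthogonality statements together shows $\{f_i\}_{i\in I}$ is a $J$-orthonormal set; because it is a $J$-frame, its closed linear span is $R(T_+)+R(T_-)=M_++M_-=\textbf{\textit{K}}$, so it is a $J$-orthonormal basis, and a dimension count shows $|I_+|+|I_-|=N$. I expect the geometric step $\zeta=\sqrt{2}\Rightarrow M_+[\perp]M_-$ to be the delicate part; everything else is bookkeeping.
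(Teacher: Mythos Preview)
Your proposal is correct and follows essentially the same route as the paper: in the forward direction you both verify weak normalization, the Parseval property on each of $(M_+,[\,\cdot\,,\cdot\,])$ and $(M_-,-[\,\cdot\,,\cdot\,])$, and then compute $\gamma(G_{M_\pm})=1$ from the canonical decomposition to obtain $\zeta=\sqrt{2}$; in the converse you both first extract orthonormality of $\{f_i\}_{i\in I_+}$ and $\{f_i\}_{i\in I_-}$ (you by plugging $f_j$ into the Parseval identity, the paper by citing the Hilbert-space fact that a unit-norm Parseval frame is an orthonormal basis), and then use $\zeta=\sqrt{2}$ to force $c_0(M_\pm,\verb"C")=1/\sqrt{2}$, hence $\gamma(G_{M_\pm})=1$, hence $\textbf{\textit{K}}=M_+[\dot\oplus]M_-$. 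Your write-up is actually more explicit than the paper's about why $c_0(M_+,\verb"C")+c_0(M_-,\verb"C")=\sqrt{2}$ forces both summands to equal $1/\sqrt{2}$ (the paper simply asserts it), and you correctly flag that the implication $\gamma(G_{M_\pm})=1\Rightarrow M_+[\perp]M_-$ is the only genuinely geometric step---the paper, too, passes over it with ``Using all the above results we conclude\ldots''.
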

\begin{proof}
Let $\{e_i:i\in{I}\}$ be a $J$-orthonormalized basis for $\textbf{\textit{K}}$. Then $\{e_i:i\in{I}\}$ spans whole of $\textbf{\textit{K}}$. Therefore, the number of vectors in the index set $I$ is $N$. Since $[e_i,e_j]=\pm\delta_{ij},~\textmd{for all}~{i,j}\in{I}$, so let $I_+=\{i:[e_i,e_i]=1\}$ and $I_-=\{i:[e_i,e_i]=-1\}$. Consider $M_+=\overline{span}\{e_i:i\in{I_+}\}$ and $M_-=\overline{span}\{e_i:i\in{I_-}\}$. Now we have a fundamental decomposition of $\textbf{\textit{K}}$ \textit{i.e.} $\textbf{\textit{K}}=M_+{[\dot{\oplus}]}M_-$ ( The symbol $[\dot{\oplus}]$ used here is for ``two-fold orthogonality", the usual one and also in the $J$-metric ). As $J(M_+)=M_+$ and $J(M_-)=-M_-$, therefore $\gamma{(G_{M_+})}=1$ and also $\gamma{(G_{M_-})}=1$. So we have $c_0(M_+,\verb"C")=\frac{1}{\sqrt{2}}$ and $c_0(M_-,\verb"C")=\frac{1}{\sqrt{2}}$. Hence $\zeta=\sqrt2$. Also $\{e_i:i\in{I}\}$ is both normalized and weakly normalized. Now since $\{e_i:i\in{I_+}\}$ is a orthonormal basis for $(M_+,[~.~])$ and $\{e_i:i\in{I_-}\}$ is a orthonormal basis for $(M_-,-[~.~])$. So $\{e_i:i\in{I}\}$ is a $\sqrt{2}-J$-Parseval frame for the Krein space $\textbf{\textit{K}}$.
\\

 Conversely, let $\{f_i:i\in{I}\}$ be a finite collection of vectors in $\textbf{\textit{K}}$ which is a weakly normalized $\sqrt{2}-J$-Parseval frame. Let $I_+=\{i:[f_i,f_i]=1\}$ and $I_-=\{i:[f_i,f_i]=-1\}$. Consider $M_+=\overline{span}\{f_i:i\in{I_+}\}$ and $M_-=\overline{span}\{f_i:i\in{I_-}\}$. Now from Hilbert space frame theory it is clear that $\{f_i:i\in{I_+}\}$ is a orthonormal basis for $(M_+,[~.~])$ and $\{f_i:i\in{I_-}\}$ is a orthonormal basis for $(M_-,-[~.~])$.\\
 Again we have $c_0(M_+,\verb"C")+c_0(M_-,\verb"C")=\sqrt{2}$, which implies that $c_0(M_+,\verb"C")=\frac{1}{\sqrt{2}}$ and $c_0(M_-,\verb"C")=\frac{1}{\sqrt{2}}$. Also by some simple numerical calculation we have $\gamma{(G_{M_+})}=1$ and also $\gamma{(G_{M_-})}=1$. Using all the above results we conclude that
$\textbf{\textit{K}}=M_+{[\dot{\oplus}]}M_-$.\\
Hence the proof.
\end{proof}
Let $\textbf{\textit{F}}=\{f_i:i\in{I}\}$ be a sequence in a Krein space $(\textbf{\textit{K}},[~.~],J)$. Consider $M_+=\overline{span}\{f_i:i\in{I_+}\}$ and $M_-=\overline{span}\{f_i:i\in{I_-}\}$, where $I_+=\{i:[f_i,f_i]>0\}$ and $I_-=\{i:[f_i,f_i]<0\}$. Now if $M_+$ is a maximal uniformly $J$-positive subspace of $\textbf{\textit{K}}$  and $M_-$ is a maximal uniformly $J$-negative subspace of $\textbf{\textit{K}}$. Then $\{f_i:i\in{I_+}\}$ and $\{f_i:i\in{I_-}\}$ will be a frame for $(M_+,[~.~])$ and $(M_-,-[~.~])$ respectively. Let $T_1$ be the synthesis operator for the frame $\{f_i:i\in{I_+}\}$ and $T_2$ be the synthesis operator for the frame $\{f_i:i\in{I_-}\}$. Let $T_1^\ast$ and $T_2^\ast$ are adjoint operators of $T_1$ and $T_2$ respectively.
\begin{definition}
Let $(\textbf{\textit{K}},[~.~],J)$ be a Krein space. A sequence of non-neutral vectors $\textbf{\textit{F}}=\{f_i:i\in{I}\}$ is said to be a disjoint sequence of $\textbf{\textit{K}}$ if \\
 $\overline{span}\{f_i:i\in{I_+}\}~{\cap}~\overline{span}\{f_i:i\in{I_-}\}=\{0\}$.
\end{definition}
\begin{example}
Every $\zeta-J$-frame in a Krein space $\textbf{\textit{K}}$ is a disjoint sequence of $\textbf{\textit{K}}$.
\end{example}
\begin{definition}
Let $\textbf{\textit{F}}=\{f_i:i\in{I}\}$ be a sequence of non-neutral vectors in the Krein space $\textbf{\textit{K}}$. Then $\textbf{\textit{F}}$ is said to be strictly disjoint sequence iff $\overline{span}\{f_i:i\in{I_+}\}~{[\perp]}~\overline{span}\{f_i:i\in{I_-}\}$.
\end{definition}
We will now derive an useful result regarding $\zeta-J$-Parseval frames for a Krein space $\textbf{\textit{K}}$. Our theorem guarantees
that a Krein space is richly supplied with $\zeta-J$-Parseval frames.
\begin{theorem}
Let $(\textbf{\textit{K}},[~.~],J)$ be a Krein space. Assume that $M_1$ and $M_2$ are definite subspaces of $\textbf{\textit{K}}$ respectively. Let $\{x_i\}$ and $\{y_i\}$ are Parseval frames for $M_1$ and $M_2$ respectively.Then $\{x_i\}\cup\{y_i\}$ is a strictly disjoint sequence in $\textbf{\textit{K}}$ only if $\{x_i\}\cup\{y_i\}$ is a $\zeta-J$-Parseval frame for $\textbf{\textit{K}}$ for $\zeta{\in}[\sqrt{2},2)$.
\end{theorem}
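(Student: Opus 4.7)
The plan is to verify the three pieces of the $\zeta$-$J$-Parseval definition in turn: that $\{x_i\}\cup\{y_i\}$ is a $J$-frame (i.e.\ $R(T_+)$ is maximal uniformly $J$-positive and $R(T_-)$ is maximal uniformly $J$-negative), that the tight equality holds with $A_{\pm}=\pm 1$, and that the associated $\zeta$ lies in $[\sqrt{2},2)$. Since $M_1,M_2$ are definite I may take, without loss of generality, $M_1$ positive and $M_2$ negative; otherwise the union cannot split into a positive and a negative indexed subfamily at all.

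The first step is to extract a fundamental decomposition from strict disjointness. The hypothesis $M_1[\perp]M_2$ together with definiteness forces $M_1\cap M_2=\{0\}$, because a common nonzero $v$ would satisfy $[v,v]>0$ and $[v,v]<0$ simultaneously. Combined with the (tacit) requirement that the union actually be a frame for $\textbf{\textit{K}}$, and hence span $\textbf{\textit{K}}$, this yields $\textbf{\textit{K}}=M_1[\dot{\oplus}]M_2$. So $M_1$ is maximal uniformly $J$-positive and $M_2$ maximal uniformly $J$-negative. With the natural splitting $I_+$ indexing the $x_i$'s and $I_-$ indexing the $y_i$'s (valid because $[x_i,x_i]>0$, $[y_i,y_i]<0$), one has $R(T_+)=M_1$ and $R(T_-)=M_2$, so $\{x_i\}\cup\{y_i\}$ is a $J$-frame by definition.

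For the Parseval identities, take $f\in M_+=M_1$. Strict disjointness gives $[f,y_i]=0$ for every $i$, so the only nonzero contribution in the sum $\sum_{i\in I_+}|[f,x_i]|^2$ comes from the $x_i$'s; the Parseval property of $\{x_i\}$ in the Hilbert space $(M_1,[\cdot,\cdot])$ then yields $\sum_{i\in I_+}|[f,x_i]|^2=[f,f]$, i.e.\ $A_+=1$. Dually, for $f\in M_-=M_2$, the Parseval property of $\{y_i\}$ in $(M_2,-[\cdot,\cdot])$ gives $\sum_{i\in I_-}|[f,y_i]|^2=-[f,f]$, so $A_-=-1$. Together $A_{\pm}=\pm 1$, which is exactly the $\zeta$-$J$-Parseval condition.

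The range $\zeta\in[\sqrt{2},2)$ is then immediate from the general fact, recalled just before Definition 3.2, that $\zeta=c_0(M_+,C)+c_0(M_-,C)\in[\sqrt{2},2)$ for every $J$-frame. The main subtle point I anticipate is the identification $M_1+M_2=\textbf{\textit{K}}$: this is not literally in the hypotheses but is needed for $\{x_i\}\cup\{y_i\}$ to be a frame of all of $\textbf{\textit{K}}$ (rather than of a proper subspace of it), which is the only reading of the theorem under which the conclusion can hold.
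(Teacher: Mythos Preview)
Your argument follows essentially the same route as the paper's: take $M_1$ positive without loss of generality, use the strict disjointness $M_1[\perp]M_2$ to obtain a fundamental decomposition $\textbf{\textit{K}}=M_1[\dot{+}]M_2$ (hence maximal uniform $J$-definiteness of the pieces), identify $R(T_{\pm})=M_{\pm}$, and read off the Parseval constants $A_{\pm}=\pm1$ from the given Parseval hypotheses. The only cosmetic differences are that the paper \emph{deduces} $M_2$ is negative from $M_1[\perp]M_2$ rather than assuming it by symmetry, and invokes \cite{isty} and \cite{jb} for the decomposition and uniform-definiteness steps; your explicit remark that $M_1+M_2=\textbf{\textit{K}}$ is a tacit hypothesis is a fair observation that the paper passes over by appeal to those references.
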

\begin{proof}
Without any loss of generality we assume that $M_1$ is positive $J$-definite. Hence it is intrinsically complete. Hence $(M_1,[~.~])$ is a Hilbert space. Given that $\{x_i:i\in{I_1}\}$ is a Parseval frame for $(M_1,[~.~])$. Therefore, $\overline{span}\{x_i:i\in{I_1}\}=M_1$ and also $[x_i,x_i]>0$.

Now since $\{x_i\}\cup\{y_i\}$ is a strictly disjoint sequence in $\textbf{\textit{K}}$, so $\overline{span}\{x_i\}~{[\perp]}~$\\
$\overline{span}\{y_i\}=\{0\}$. Hence $\overline{span}\{y_i\}$ is a closed negative subspace of $\textbf{\textit{K}}$. Now $\{y_i:i\in{I_2}\}$ is a Parseval frame for $M_2$. Therefore, $\overline{span}\{y_i:i\in{I_2}\}=M_2$. Hence $M_2$ is a negative $J$-definite subspace of $\textbf{\textit{K}}$. Since $M_1~[\perp]~M_2$, so we have a fundamental decomposition of $\textbf{\textit{K}}$ \textit{i.e.} $\textbf{\textit{K}}=M_1~[\dot{+}]~M_2$ (see \cite{isty}). Since both $M_1$ and $M_2$ is closed $J$-definite and also intrinsically complete, hence both $M_1$ and $M_2$ are uniformly $J$-definite (see \cite{jb}). Let $\zeta=c_0(M_+,\verb"C")+c_0(M_-,\verb"C")$, then $\{x_i\}\cup\{y_i\}$ is a $\zeta-J$-frame for $\textbf{\textit{K}}$. As both $\{x_i\}$ and $\{y_i\}$ are Parseval frames for $M_1$ and $M_2$ respectively, so $\{x_i\}\cup\{y_i\}$ is also a $\zeta-J$-Parseval frame for $\textbf{\textit{K}}$.\\
\end{proof}
\begin{remark} The statement of the above theorem is sufficient but not necessary. Since let $\{x_i:i\in{I_1}\}\cup\{y_i:i\in{I_2}\}$ is a $\zeta-J$-Parseval frame for $\textbf{\textit{K}}$ for $\zeta{\in}[\sqrt{2},2)$, then there exists uniformly $J$-positive definite subspace $M_+$ and uniformly $J$-negative subspace $M_-$ such that $\{x_i:i\in{I_1}\}$ is a Parseval frame for $(M_+,[~.~])$ and $\{y_i:i\in{I_2}\}$ is a Parseval frame for the Hilbert space $(M_-,[~.~])$. But $M_+$ may not be perpendicular to $M_-$. Hence $\{x_i\}\cup\{y_i\}$ may not be a strictly disjoint sequence in $\textbf{\textit{K}}$.
\end{remark}
The following result for $\zeta-J$-Parseval frames describes how $\zeta-J$-Parseval frames can be combined to form a new $\zeta-J$-Parseval frame under some restrictions.
\begin{theorem}
Let $\{f_i:i\in{I}\}$ and $\{g_i:i\in{I}\}$ are $\zeta-J$-Parseval frames for $(\textbf{\textit{K}},[~.~],J)$ such that $M_+=\overline{span}\{g_i:i\in{I_{+}}\}=\overline{span}\{f_i:i\in{I_+}\}$ and $M_-=\overline{span}\{g_i:i\in{I_{-}}\}=\overline{span}\{f_i:i\in{I_-}\}$. Then for all scalars $\alpha$ and $\beta$ with $|\alpha|^2+|\beta|^2=1$, $\{{\alpha}f_i+{\beta}g_i:i\in{I}\}$ is a $\zeta-J$-Parseval frame for $\textbf{\textit{K}}$ iff $Re(\alpha\overline{\beta}[f_i,g_i])<\frac{{\pm}|\alpha|^2[f_i,f_i]{\pm}\beta|^2[g_i,g_i]}{2},~~\textmd{for all}~{i\in{I_{\pm}}}$,  $T^{\ast}_{1f}T_{1g}+T^{\ast}_{1g}T_{1f}=0$ and $T^{\ast}_{2f}T_{2g}+T^{\ast}_{2g}T_{2f}=0$, where $T_{1f}$ and $T_{1g}$ are synthesis operators of $\{f_i:i\in{I_+}\}$ and $\{g_i:i\in{I_{+}}\}$ respectively and $T_{2f}$ and $T_{2g}$ are synthesis operators of $\{f_i:i\in{I_-}\}$ and $\{g_i:i\in{I_{-}}\}$ respectively.
\end{theorem}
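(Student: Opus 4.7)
The plan is to prove the equivalence by matching up the three asserted conditions with the three requirements built into the definition of a $\zeta$-$J$-Parseval frame, namely: (a) that $\{\alpha f_i+\beta g_i\}$ split into positive and negative parts indexed by the same $I_\pm$, (b) that the tight-frame identity holds with constants $\pm 1$ on $M_\pm$, and (c) that the value of $\zeta$ is preserved. I will prove both directions in parallel, since each of the three conditions will turn out to be equivalent to exactly one of these three ingredients.

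First I would expand $[\alpha f_i+\beta g_i,\alpha f_i+\beta g_i]=|\alpha|^2[f_i,f_i]+|\beta|^2[g_i,g_i]+2\,\mathrm{Re}(\alpha\overline{\beta}[f_i,g_i])$. Using that $f_i$ and $g_i$ share the sign-partition ($[f_i,f_i]$ and $[g_i,g_i]$ have the same sign, because $f_i,g_i\in M_\pm$ with $M_+$ uniformly $J$-positive and $M_-$ uniformly $J$-negative), the stated real-part inequality is exactly the condition that $\alpha f_i+\beta g_i$ has the same sign class as $f_i$. Hence the index sets $I_\pm$ for the new family coincide with the original $I_\pm$, and the combined vectors lie in the correct subspaces $M_\pm$.

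The second step is the frame-operator computation on $M_+$, which I will treat as an ordinary Hilbert space under $[\,\cdot\,,\cdot\,]$. Writing the synthesis operator of the combined family as $T_{1h}=\alpha T_{1f}+\beta T_{1g}:\ell^2(I_+)\to M_+$, the frame operator becomes
\begin{equation}
T_{1h}T_{1h}^{\ast}=|\alpha|^2 T_{1f}T_{1f}^{\ast}+|\beta|^2 T_{1g}T_{1g}^{\ast}+\alpha\overline{\beta}\,T_{1f}T_{1g}^{\ast}+\overline{\alpha}\beta\,T_{1g}T_{1f}^{\ast}.
\end{equation}
Because $\{f_i\}_{I_+}$ and $\{g_i\}_{I_+}$ are Parseval frames for $M_+$, the first two terms reduce to $(|\alpha|^2+|\beta|^2)I_{M_+}=I_{M_+}$. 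Hence the new family is Parseval on $M_+$ iff the cross-term vanishes, which (after taking adjoints and absorbing $\alpha,\beta$) is exactly the operator equation $T_{1f}^{\ast}T_{1g}+T_{1g}^{\ast}T_{1f}=0$ asserted in the theorem. The same argument on $M_-$ (working with the Hilbert space $(M_-,-[\,\cdot\,,\cdot\,])$) yields the analogous identity $T_{2f}^{\ast}T_{2g}+T_{2g}^{\ast}T_{2f}=0$. The fact that the spans of $\{\alpha f_i+\beta g_i\}_{I_\pm}$ still fill out $M_\pm$ is automatic once the identity on $M_\pm$ is achieved, since a Parseval frame must span the whole Hilbert space.

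Finally, since $M_\pm$ are unchanged and $\zeta=c_0(M_+,\mathtt{C})+c_0(M_-,\mathtt{C})$ depends only on these subspaces, the parameter $\zeta$ is preserved automatically; combined with steps one and two this finishes both directions. The main obstacle is the second step: verifying that the cross-term identity $\alpha\overline{\beta}\,T_{1f}T_{1g}^{\ast}+\overline{\alpha}\beta\,T_{1g}T_{1f}^{\ast}=0$ can be restated as the scalar-free operator equation in the hypothesis, and confirming that this equivalence really goes both ways for the specific $\alpha,\beta$ at hand rather than only under a genericity assumption. A secondary subtlety is keeping track of signs and adjoints while moving between the Krein inner product on $\textbf{\textit{K}}$ and the Hilbert inner products on $(M_+,[\,\cdot\,,\cdot\,])$ and $(M_-,-[\,\cdot\,,\cdot\,])$, which is where the $\pm$ signs in the inequality condition originate.
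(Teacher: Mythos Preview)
Your approach is essentially the same as the paper's: expand $[\alpha f_i+\beta g_i,\alpha f_i+\beta g_i]$ to obtain the sign inequality, write the combined synthesis operator as $\mathfrak{T}_1=\alpha T_{1f}+\beta T_{1g}$ and impose the Parseval identity to isolate the cross term, then note that $\zeta$ depends only on $M_\pm$ and is therefore unchanged. The only difference is that the paper computes $\mathfrak{T}_1^{\ast}\mathfrak{T}_1=I$ rather than your $\mathfrak{T}_1\mathfrak{T}_1^{\ast}=I$, which is how it arrives directly at the $\ell^2(I_+)$-side condition $T_{1f}^{\ast}T_{1g}+T_{1g}^{\ast}T_{1f}=0$; the issue you flag as your ``main obstacle'' is precisely this $T^{\ast}T$ versus $TT^{\ast}$ switch.
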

\begin{proof}
Let $\{{\alpha}f_i+{\beta}g_i:i\in{{I}}\}$ be a $\zeta-J$-Parseval frame for $\textbf{\textit{K}}$.\\
So $[{\alpha}f_i+{\beta}g_i,{\alpha}f_i+{\beta}g_i]^{>}_{<}0,~~\textmd{for all}~{i}\in{I}$.

\textit{i.e.} we have $Re(\alpha\overline{\beta}[f_i,g_i])<\frac{|\alpha|^2[f_i,f_i]+|\beta|^2[g_i,g_i]}{2}$

and $Re(\alpha\overline{\beta}[f_i,g_i])<\frac{-|\alpha|^2[f_i,f_i]-|\beta|^2[g_i,g_i]}{2}$.

Since $\{f_i:i\in{I}\}$ is a $\zeta-J$-Parseval frame for $\textbf{\textit{K}}$, therefore $\{f_i:i\in{I_+}\}$ is a Parseval frame for $(M_+,[~.~])$. Let $T_{1f}$ be the synthesis operator of $\{f_i:i\in{I_+}\}$ for the space $(M_+,[~.~])$. Then $T_{1f}:~\ell^2(I_+)\rightarrow~M_+$ is defined by $T_{1f}(\{c_i\})=\sum_{i\in{I_+}}c_if_i$ and $T^{\ast}_{1f}:~M_+\rightarrow~\ell^2(I_+)$ \textit{i.e.} the analysis operator is defined by $T^{\ast}_{1f}(f)=\{[f,f_i]\}_{i\in{I_+}}$. Similarly $\{g_i:i\in{I}\}$ is also a $\zeta-J$-Parseval frame for $\textbf{\textit{K}}$. Hence $\{g_i:i\in{I_{+}}\}$ is a Parseval frame for $(M_+,[~.~])$. So, $T_{1g}$ and $T^{\ast}_{1g}$ are defined as above. Also $\{f_i:i\in{I_-}\}$ and $\{g_i:i\in{I_{-}}\}$ are Parseval frames for $(M_-,-[~.~])$, so we can define the operators \textit{viz.} $T_{2f}$, $T^{\ast}_{2f}$ $T_{2g}$ and $T^{\ast}_{2g}$ as above.

We assume that for all scalars $\alpha$ and $\beta$ such that $|\alpha|^2+|\beta|^2=1$, $\{{\alpha}f_i+{\beta}g_i:i\in{{I}}\}$ is a $\zeta-J$-Parseval frame for $\textbf{\textit{K}}$.
So $\{{\alpha}f_i+{\beta}g_i:i\in{I_+}\}$ is a Parseval frame for $(M_+,[~.~])$. Similarly $\{{\alpha}f_i+{\beta}g_i:i\in{I_-}\}$ is a Parseval frame for $(M_-,-[~.~])$. Let $T_1$ be the synthesis operator for the frame $\{{\alpha}f_i+{\beta}g_i:i\in{{I_+}}\}$ in $(M_+,[~.~])$ and $T_2$ be the synthesis operator for the frame $\{{\alpha}f_i+{\beta}g_i:i\in{{I_-}}\}$ in $(M_-,-[~.~])$. Also let us assume that $T^{\ast}_1$ and $T^{\ast}_2$ be the adjoint operators of $T_1$ and $T_2$ respectively. Now let us define $\mathfrak{T}_1:=~\alpha{T_{1f}}+\beta{T_{1g}}$. Then we have $\mathfrak{T}^{\ast}_1=~\overline{\alpha}T^{\ast}_{1f}+\overline{\beta}T^{\ast}_{1g}$. A direct calculation shows that $\mathfrak{T}_1=T_1$ is the synthesis operator for the frame $\{{\alpha}f_i+{\beta}g_i:i\in{{I_+}}\}$ in $(M_+,[~.~])$.\\
Hence $\mathfrak{T}^{\ast}_1\mathfrak{T}_1=I$. Therefore, $T^{\ast}_{1f}T_{1g}+T^{\ast}_{1g}T_{1f}=0$.\\
Similarly we can show that $T^{\ast}_{2f}T_{2g}+T^{\ast}_{2g}T_{2f}=0$.\\

Conversely let $T{\in}L(\ell^2(I),\textbf{\textit{K}})$ be the synthesis operator for the Bessel family $\{{\alpha}f_i+{\beta}g_i:i\in{{I}}\}$ in the Krein space $(\textbf{\textit{K}},[~.~],J)$.\\
Then the condition $Re(\alpha\overline{\beta}[f_i,g_i])<\frac{{\pm}|\alpha|^2[f_i,f_i]{\pm}\beta|^2[g_i,g_i]}{2},~~\textmd{for all}~{i\in{I_{\pm}}}$ implies that\\
$I_+=~\{i{\in}I:[{\alpha}f_i+{\beta}g_i,{\alpha}f_i+{\beta}g_i]>0\}$ and $I_-=~\{i{\in}I:[{\alpha}f_i+{\beta}g_i,{\alpha}f_i+{\beta}g_i]<0\}$.\\
So we have a orthogonal decomposition of $\ell^2(I)$. Let $P_{\pm}$ denote the orthogonal projection onto $\ell^2(I_{\pm})$, also let $T_{\pm}=~TP_{\pm}$. Now $R(T)=~R(T_+)+R(T_-)$, and $R(T_+)=M_+$ and $R(T_-)=M_-$, since $T(\{c_i\})=\sum_{i{\in}I}c_i({\alpha}f_i+{\beta}g_i)=\alpha\sum_{i{\in}I}c_if_i{+}\beta\sum_{i{\in}I}c_ig_i$. Hence by the definition of $J$-frame, $\{{\alpha}f_i+{\beta}g_i:i\in{{I}}\}$ is a $J$-frame for the Krein space $\textbf{\textit{K}}$.

Now by above we can easily show that the condition $T^{\ast}_{1f}T_{1g}+T^{\ast}_{1g}T_{1f}=0$ implies that $\{{\alpha}f_i+{\beta}g_i:i\in{{I_+}}\}$ is a Parseval frame for $(M_+,[~.~])$ for all scalars $\alpha$ and $\beta$ such that $|\alpha|^2+|\beta|^2=1$, and the condition $T^{\ast}_{2f}T_{2g}+T^{\ast}_{2g}T_{2f}=0$ implies that $\{{\alpha}f_i+{\beta}g_i:i\in{I_-}\}$ is a Parseval frame for $(M_-,-[~.~])$ for all scalars $\alpha$ and $\beta$ such that $|\alpha|^2+|\beta|^2=1$.\\
In order to show that for all scalars $\alpha$ and $\beta$ with $|\alpha|^2+|\beta|^2=1$, $\{{\alpha}f_i+{\beta}g_i:i\in{I}\}$ is a $J$-Parseval frame for $\textbf{\textit{K}}$, we only need to find $\zeta\in{[\sqrt{2},2)}$ such that $\zeta=c_0(M_+,\verb"C")+c_0(M_-,\verb"C")$. Now since the quantities  $c_0(M_+,\verb"C")$ and $c_0(M_-,\verb"C")$ are fixed throughout and $\{f_i:i\in{I}\}$ is a given $\zeta-J$-Parseval frame, so we already have $\zeta$ which satisfies the above equation.\\
Hence the proof.
\end{proof}

\subsection{Frame Potential}
In 2001, John Benedetto and Matthew Fickus \cite{jm} developed a theoretical notion of frame potential analogous to the potential energy \cite{dkde} in Physics. The frame potential of a collection of vectors must be a scalar quantity derived from the inner products between the vectors. In this section we define the frame potential for a finite collection of vectors in Krein space $\textbf{\textit{K}}$, similar to what had been done for frames in Hilbert space theory.

Let $\textbf{\textit{K}}$ be a Krein space of dimension $N$. So if $dim~M_+=m$ and $dim~M_-=n$. Then $m+n=N$. Let $X$ be any finite set and $|X|$:= no. of elements in $X$. Now let $\textbf{\textit{F}}=\{f_i:i\in{I}\}$ be a finite $\zeta-J$-frame for the Krein space $\textbf{\textit{K}}$ with $|\textbf{\textit{F}}|=M$. We have $I_+=\{i:[f_i,f_i]>0\}$ and $I_-=\{i:[f_i,f_i]<0\}$. Considering $|I_+|=p$ and $|I_-|=q$, we have $p+q=M$.

\subsection{$J$-force between two vectors}
Let $\textbf{\textit{F}}=\{f_i\}_{i=1}^M$ be a finite $\zeta-J$-frame for $\textbf{\textit{K}}$,  $M_+$ contain all the positive elements and $M_-$ contain all the negative elements of the frame.\\
Then the $J$-frame force between $f_i,f_j\in{M_+}$ is
\begin{equation}
FF_J(f_i,f_j)=2[f_i,f_j](f_i-f_j),~~~~\textmd{for all}~{i,j\in{I_+}}
\end{equation}
Similarly $J$-frame force between $f_{i^\prime},f_{j^\prime}\in{M_-}$ is
\begin{equation}
FF_J(f_{i^\prime},f_{j^\prime})=-2[f_{i^\prime},f_{j^\prime}](f_{i^\prime}-f_{j^\prime}),~~~~\textmd{for all}~{i^\prime,j^\prime\in{I_-}}
\end{equation}
and $J$-frame force between $f_{i^{\prime\prime}},f_{j^{\prime\prime}}$ where ${i^{\prime\prime}}\in{I_+}$ and ${j^{\prime\prime}}\in{I_-}$ is
\begin{equation}
FF_J(f_{i^{\prime\prime}},f_{j^{\prime\prime}})=2(\|f_{i^{\prime\prime}}\|_J\|f_{j^{\prime\prime}}\|_{J}\zeta+[f_{i^{\prime\prime}},
f_{j^{\prime\prime}}]_J)(f_{i^{\prime\prime}}-f_{j^{\prime\prime}})
\end{equation}
$~~\textmd{for all}~{i^{\prime\prime}\in{I_+},~j^{\prime\prime}\in{I_-}}~\textmd{and~}\zeta~\in[\sqrt{2},2).$

\subsection{$J$-potential between two vectors and total potential}
The $J$-frame force is conservative when the vectors are constricted to lie on sphere of uniformly $J$-positive definite and uniformly $J$-negative definite subspace respectively. So let us consider $f_i,f_j\in{M_+}$ such that $\|f_i\|_+=a_i$ and $\|f_j\|_+=a_j$. Then $\|f_i-f_j\|_+^2=a_i^2-2[f_i,f_j]+a_j^2$, so $FF_J(f_i,f_j)=(a_i^2+a_j^2-\|f_i-f_j\|_+^2)(f_i-f_j),~\textmd{for all}~{i,j\in{I_+}}$. Now $p(x)=-\int(a_i^2+a_j^2-x^2)x~dx=\frac{1}{4}x^2\{x^2-2(a_i^2+a_j^2)\}$, and evaluating at $x=\|f_i-f_j\|_{+}$, the $J$-potential $P_J(f_i,f_j)=p(\|f_i-f_j\|_{+})=[f_i,f_j]^2-\frac{1}{4}(a_i^2+a_j^2)^2,~\textmd{for all}~{i,j\in{I_+}}$.

Similarly the $J$-potential between two negative vectors i.e. $P_J(f_{i^\prime},f_{j^\prime})=[f_{i^\prime},f_{j^\prime}]^2-\frac{1}{4}(a_{i^\prime}^2+a_{j^\prime}^2)^2,~~\textmd{for all}~{{i^\prime},{j^\prime}\in{I_-}}$.
Here $a_{i^\prime}:=\|f_{i^\prime}\|_{-}$ and $a_{j^\prime}:=\|f_{j^\prime}\|_{-}$.

And, finally let $f_{i^{\prime\prime}}\in{M_+}$ and $f_{j^{\prime\prime}}\in{M_-}$. But here the $J$-force between a positive vector and a negative vector is not conservative. So we have a system where there is both conservative and non-conservative forces. There is no definition of potential of a non-conservative force. So we define the $J$-potential between a positive vector and a negative vector in the following way
$$P_J(f_{i^{\prime\prime}},f_{j^{\prime\prime}})=\frac{1}{2}(\zeta^2-1),~~\textmd{for all}~{{i^{\prime\prime}}\in{I_+},{j^{\prime\prime}}\in{I_-}}.$$

Now total $J$-potential of the system \textit{i.e.}\\

$TP_J(\{f_i:i\in{I}\})=\sum_{i,j\in{I_+}}|[f_i,f_j]|^2+\sum_{i,j\in{I_-}}|[f_i,f_j]|^2+\sum_{i\in{I_+},j\in{I_-}}\frac{1}{2}(\zeta^2-1)-\frac{1}{4}\sum_{i,j\in{I_+}}(a_i^2+a_j^2)^2-\frac{1}{4}\sum_{i,j\in{I_-}}(a_i^2+a_j^2)^2$.

\subsection{$J$-frame potential}
Since the additive constants has no physical significance, so we can disregard the additive constants.
The $J$-frame potential of a $\zeta-J$-frame $\textbf{\textit{F}}=\{f_i:i\in{I}\}$ in a Krein space $\textbf{\textit{K}}$ will be defined by
\begin{equation}
FP_J(\{f_i:i\in{I}\})=\sum_{i\in{I_+}}\sum_{j\in{I_+}}|[f_i,f_j]|^2+\sum_{i\in{I_-}}\sum_{j\in{I_-}}|[f_i,f_j]|^2
\end{equation}
\begin{example}
Let us consider the Krein space as in example (3.5) and let us consider the collection of vectors :\\
 $\{f_1=(1,0,-\frac{1}{\sqrt6}),f_2=(0,1,-\frac{1}{\sqrt6}),
f_3=(1,1,-\frac{\sqrt2}{\sqrt3}),f_4=(\frac{1}{\sqrt5},\frac{1}{\sqrt5},\frac{\sqrt3}{\sqrt5}),f_5=(1,1,\sqrt3)\}$.
Then it is a $J$-frame for the above Krein space. Here $\{f_1,f_2,f_3\}$ is the collection of positive vectors and $\{f_4,f_5\}$ is the set of negative vectors. Here $M_+=\{(x,y,z)\in{\mathbb{R}^3}:\frac{1}{\sqrt2}x+\frac{1}{\sqrt2}y+\sqrt{3}z=0\}$ and $M_-=\{(x,y,z)\in{\mathbb{R}^3}:\frac{x}{\frac{1}{2}}=\frac{y}{\frac{1}{2}}=\frac{z}{\frac{\sqrt3}{2}}\}$.
Now by calculation we have $G_{M_+}(x,y,z)=(\frac{7}{8}x-\frac{1}{8}y-\frac{\sqrt6}{8}z,-\frac{1}{8}x+\frac{7}{8}y-\frac{\sqrt6}{8}z,
-\frac{1}{\sqrt6}(\frac{3}{4}x+\frac{3}{4}y-\frac{\sqrt6}{4}z)),~\textmd{for all}~{(x,y,z)\in{\mathbb{R}^3}}$. So $\gamma(G_{M_+})=\frac{\sqrt6}{\sqrt7}$.
Similarly $G_{M_-}(x,y,z)=\frac{2\sqrt3}{5}(z+\frac{1}{\sqrt3}x+\frac{1}{\sqrt3}y)(\frac{1}{2},\frac{1}{2},\frac{\sqrt3}{2}),~\textmd{for all}~{(x,y,z)\in{\mathbb{R}^3}}$.
We have $\gamma(G_{M_-})=\frac{2}{\sqrt5}$. So here $\zeta=\frac{1}{\sqrt2}(\sqrt{\frac{\sqrt5+2}{2\sqrt5}}+\sqrt{\frac{\sqrt5-2}{2\sqrt5}})
+\frac{1}{\sqrt2}(\sqrt{\frac{\sqrt7+\sqrt6}{2\sqrt7}}+\sqrt{\frac{\sqrt7-\sqrt6}{2\sqrt7}})$.

 So according to our definition the $J$-frame force between the vectors $f_1$ and $f_3$ is $FF_J(f_1,f_3)=\frac{4}{3}(f_1-f_3)$. Similarly the $J$-frame force between the vectors $f_4$ and $f_5$ is $FF_J(f_4,f_5)=\frac{2}{\sqrt5}(f_4-f_5)$. Also the $J$-frame force between the vectors $f_2$ and $f_5$ is $FF_J(f_2,f_5)=2\{\frac{\sqrt7}{\sqrt6}.\sqrt5.\zeta+(1-\frac{1}{\sqrt2})\}(f_2-f_5)$. We will also calculate $J$-potential between some vectors. The $J$-potential between $f_1$ and $f_2$ is given by
$P_J(f_1,f_2)=\frac{-589}{36^2}$. The $J$-potential between $f_4$ and $f_5$ is $(\frac{1}{5}-\frac{26^2}{4.25^2})$. Similarly the $J$-potential between $f_2$ and $f_5$ is $\frac{1}{2}(\zeta^2-1)$.\\

Now we want to calculate the $J$-potential of the weakly normalized version of the above system. So let us consider the collection of vectors $\textbf{\textit{F}}=\{\tilde{f}_1=\frac{6}{5}(1,0,-\frac{1}{\sqrt6}),\tilde{f}_2=\frac{6}{5}(0,1,-\frac{1}{\sqrt6}),\tilde{f}_3=
\frac{3}{4}(1,1,-\frac{\sqrt2}{\sqrt3}),\tilde{f}_4=5(\frac{1}{\sqrt5},\frac{1}{\sqrt5},\frac{\sqrt3}{\sqrt5}),\tilde{f}_5=(1,1,\sqrt3)\}$. Then $FP_J(\textbf{\textit{F}})=(3+\frac{72}{25^2}+\frac{36}{25})+26$.\\

In the following section of our work we will show that $\frac{p^2}{m}+\frac{q^2}{n}=\frac{9}{2}+4$ is the minimum value that can be attained by a weakly normalized $\zeta-J$-frame in a Krein space, and also the minimum is attained only when the given normalized $\zeta-J$-frame is a tight $\zeta-J$-frame.
Since $(3+\frac{72}{25^2}+\frac{36}{25})+26~>~\frac{9}{2}+4$, so we can also say that $\textbf{\textit{F}}$ is not $\zeta-J$-tight.
\end{example}
In this section we will give a necessary and sufficient condition for $J$-frame potential for a Krein space $\textbf{\textit{K}}$ to be minimum.
\begin{lemma}
Let $\textbf{\textit{F}}=\{f_i\}_{i=1}^M$ be a weakly normalized $\zeta-J$-tight frame in the Krein space $\textbf{\textit{K}}$ of dimension $N$. Then the $J$-frame potential $FP_J$ of $\textbf{\textit{F}}$ is $\frac{p^2}{m}+\frac{q^2}{n}$.
\end{lemma}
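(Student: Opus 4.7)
The plan is to split the double sum defining $FP_J$ into its positive and negative blocks and handle each by invoking Theorem 2.2 (the Benedetto--Fickus identity $A = N/d$ for finite normalized tight frames) in the appropriate (anti-)Hilbert space.

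First I would unpack what weak normalization and $\zeta$-$J$-tightness mean on the two blocks. Weak normalization gives $[f_i,f_i] = 1$ for $i \in I_+$ and $[f_i,f_i]=-1$ for $i \in I_-$, so $\{f_i : i \in I_+\}$ is a unit-norm family in the Hilbert space $(M_+,[\,,\,])$ and $\{f_i : i \in I_-\}$ is unit-norm in the Hilbert space $(M_-,-[\,,\,])$. Tightness says $A_+[f,f] = \sum_{i \in I_+}|[f,f_i]|^2$ for $f \in M_+$ and $A_-[f,f] = \sum_{i \in I_-}|[f,f_i]|^2$ for $f \in M_-$. Testing the first with $f = f_j$ for $j \in I_+$ yields $\sum_{i \in I_+}|[f_j,f_i]|^2 = A_+$, and testing the second with $f = f_j$ for $j \in I_-$ yields $\sum_{i \in I_-}|[f_j,f_i]|^2 = -A_-$ (the sign flip coming from $[f_j,f_j]=-1$).

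Next I would identify the constants $A_{\pm}$. In the Hilbert space $(M_+,[\,,\,])$ of dimension $m$, the family $\{f_i : i \in I_+\}$ is a normalized tight frame with $p$ elements and tightness constant $A_+$, so Theorem 2.2 forces $A_+ = p/m$. Reading the negative-block tightness in the associated Hilbert space $(M_-,-[\,,\,])$, the defining relation becomes $(-A_-)(-[f,f]) = \sum_{i \in I_-}|[f,f_i]|^2$, i.e.\ $\{f_i : i \in I_-\}$ is a normalized tight frame in $(M_-,-[\,,\,])$ with tightness constant $-A_-$; Theorem 2.2 applied to this Hilbert space of dimension $n$ with $q$ elements gives $-A_- = q/n$.

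Finally I would add the two blocks:
\begin{equation*}
FP_J(\textbf{\textit{F}}) \;=\; \sum_{j \in I_+}\sum_{i \in I_+}|[f_j,f_i]|^2 \;+\; \sum_{j \in I_-}\sum_{i \in I_-}|[f_j,f_i]|^2 \;=\; \sum_{j \in I_+} A_+ \;+\; \sum_{j \in I_-}(-A_-) \;=\; p \cdot \tfrac{p}{m} + q \cdot \tfrac{q}{n},
\end{equation*}
which is the claimed value $\tfrac{p^2}{m}+\tfrac{q^2}{n}$. There is no real obstacle here beyond book-keeping; the only subtlety is keeping track of the sign convention so that the negative block, whose tightness constant $A_-$ is non-positive, is correctly matched with the positive tightness constant $-A_- = q/n$ arising from the anti-Hilbert structure on $M_-$.
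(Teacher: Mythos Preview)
Your proposal is correct and follows essentially the same route as the paper: split $FP_J$ into the $I_+$ and $I_-$ blocks, use the tightness identity on each block with the weak normalization $[f_j,f_j]=\pm 1$, and invoke the $A=N/d$ formula (Theorem~2.2) to identify the tightness constants as $p/m$ and $q/n$. If anything, your handling of the sign convention on the negative block (distinguishing the Krein-space constant $A_-$ from the Hilbert-space tightness constant $-A_-=q/n$ in $(M_-,-[\,,\,])$) is more careful than the paper's own write-up, which silently reuses the symbol $A_-$ for the positive Hilbert-space bound.
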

\begin{proof}
Since $\textbf{\textit{F}}=\{f_i\}_{i=1}^M$ be a $J$-tight frame in the Krein space $\textbf{\textit{K}}$, so $\{f_i:i\in{I_+}\}\subset{S_{M_+}}$ is
a tight frame for the Hilbert space $(M_+,[~.~])$ with frame bound $A_+$. Also $\{f_i:i\in{I_-}\}$ is a tight frame for the Hilbert space $(M_-,-[~.~])$ with frame bound $A_-$. We have $|\textbf{\textit{F}}|=M$. Let $|I_+|=p$ and $|I_-|=q$. Also since $dim~\textbf{\textit{K}}=N$, let $dim~M_+=m$ and $dim~M_-=n$. Now
\begin{equation*}
\begin{split}
FP_J(\{f_i:i=1,\ldots,M\}) & =\sum_{i\in{I_+}}\sum_{j\in{I_+}}|[f_i,f_j]|^2+\sum_{i\in{I_-}}\sum_{j\in{I_-}}|[f_i,f_j]|^2\\
& =\sum_{i\in{\{1,\ldots,p\}}}A_+[f_i,f_i]+\sum_{j\in{\{1,\ldots,q\}}}A_-[f_i,f_i]\\
& =\frac{p^2}{m}+\frac{q^2}{n}.~~(\because~A_+=\frac{p}{m}~and~A_-=\frac{q}{n})\\
\end{split}
\end{equation*}
\end{proof}
\begin{theorem}
Let $\textbf{\textit{K}}$ be a Krein space of dimension $N$ and $\{f_i:i\in{I}\}$ be any weakly normalized $\zeta-J$-frame for $\textbf{\textit{K}}$. Also let $M_+=\overline{span}\{f_i:i\in{I_+\}}$ and $M_-=\overline{span}\{f_i:i\in{I_-\}}$. We assume that $dim{M_+}=m$, $dim{M_-}=n$, $|I_+|=p$ and $|I_-|=q$. Then the minimum value of the $J$-frame potential for a weakly normalized $\zeta-J$-frame is $\frac{p^2}{m}+\frac{q^2}{n}$ and this minimum is attained exactly when the frame vectors form a weakly normalized $\zeta-J$-tight frame for $\textbf{\textit{K}}$ for some $\zeta\in[\sqrt{2},2)$.
\end{theorem}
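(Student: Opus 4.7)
The plan is to split the $J$-frame potential into its positive and negative parts and reduce the problem to two independent instances of the classical Hilbert space result of Benedetto and Fickus (Theorem 2.6 in the excerpt). Observe that by definition
\[
FP_J(\{f_i:i\in I\}) \;=\; \sum_{i,j\in I_+}|[f_i,f_j]|^2 \;+\; \sum_{i,j\in I_-}|[f_i,f_j]|^2,
\]
so the two sums decouple. I would first note that because $M_+=\overline{\mathrm{span}}\{f_i:i\in I_+\}$ is uniformly $J$-positive, the pair $(M_+,[\,\cdot\,,\,\cdot\,])$ is a genuine Hilbert space of dimension $m$; since $\textbf{\textit{F}}$ is weakly normalized, $\{f_i:i\in I_+\}$ is a unit-norm collection of $p$ vectors in this $m$-dimensional Hilbert space, with $p\ge m$. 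The analogous statement holds for $(M_-,-[\,\cdot\,,\,\cdot\,])$, a Hilbert space of dimension $n$ containing the unit vectors $\{f_i:i\in I_-\}$, with $q\ge n$.

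Next I would apply Theorem 2.6 twice. For the positive part, it gives
\[
\sum_{i,j\in I_+}|[f_i,f_j]|^2 \;\ge\; \frac{p^2}{m},
\]
with equality iff $\{f_i:i\in I_+\}$ is an FNTF for $(M_+,[\,\cdot\,,\,\cdot\,])$, necessarily with frame bound $A_+=p/m$ by Theorem 2.2. The same reasoning applied to $(M_-,-[\,\cdot\,,\,\cdot\,])$ yields $\sum_{i,j\in I_-}|[f_i,f_j]|^2\ge q^2/n$ with equality iff $\{f_i:i\in I_-\}$ is an FNTF for $(M_-,-[\,\cdot\,,\,\cdot\,])$, with frame bound $q/n$. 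Summing the two inequalities gives the asserted lower bound $FP_J\ge p^2/m+q^2/n$, which matches the value computed in the preceding lemma for weakly normalized $\zeta$-$J$-tight frames.

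Finally, I would verify the equality case translates back into the $\zeta$-$J$-tight frame condition. Simultaneous equality in both estimates means that for every $f\in M_+$,
\[
\frac{p}{m}[f,f]=\sum_{i\in I_+}|[f,f_i]|^2,
\]
and for every $f\in M_-$, $-\frac{q}{n}[f,f]=\sum_{i\in I_-}|[f,f_i]|^2$, i.e.\ $A_\pm[f,f]=\sum_{i\in I_\pm}|[f,f_i]|^2$ with $A_+=p/m$ and $A_-=-q/n$. This is exactly the definition of a $\zeta$-$J$-tight frame. The value $\zeta=c_0(M_+,\mathtt{C})+c_0(M_-,\mathtt{C})$ is fixed by the geometry of $M_\pm$ and already lies in $[\sqrt{2},2)$ because $\textbf{\textit{F}}$ was assumed to be a $\zeta$-$J$-frame to begin with, so no further check is needed.

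The only subtlety I expect is a bookkeeping one, namely that the two minimization problems really are independent: the decomposition $FP_J=FP_J^{+}+FP_J^{-}$ is free of cross terms precisely because the frame potential in the Krein setting was defined without the mixed sums $\sum_{i\in I_+,\,j\in I_-}|[f_i,f_j]|^2$. Once this observation is made, the argument is an immediate double application of the Hilbert space theorem, and there is no genuine analytic obstacle to overcome.
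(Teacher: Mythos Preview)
Your proof is correct and rests on the same decomposition as the paper's: both split $FP_J$ into the $M_+$ and $M_-$ pieces and treat each as an independent Hilbert-space frame-potential problem. The only real difference is in how the Hilbert-space minimization is carried out. The paper rewrites each piece as $\operatorname{tr}(S_{G_\pm}^2)$, imposes the trace constraints $\operatorname{tr}(S_{G_+})=p$ and $\operatorname{tr}(S_{G_-})=q$ coming from weak normalization, and then runs a Lagrange-multiplier argument on the eigenvalues to conclude that the minimum occurs when $S_{G_+}=\tfrac{p}{m}I_{M_+}$ and $S_{G_-}=\tfrac{q}{n}I_{M_-}$. You instead invoke the Benedetto--Fickus theorem (stated earlier in the paper as Theorem~2.8) as a black box on each factor. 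Your route is shorter and arguably cleaner, since it exploits a result the paper has already quoted; the paper's route is more self-contained, essentially re-proving that theorem inside the argument. Either way the equality characterization and the observation that $\zeta$ is fixed by the geometry of $M_\pm$ are handled identically.
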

\begin{proof}
From the preceding lemma, we know that if $\textbf{\textit{F}}$ is a weakly normalized $\zeta-J$-tight frame then its associated frame potential is $\frac{p^2}{m}+\frac{q^2}{n}$. It remains to be shown that $\frac{p^2}{m}+\frac{q^2}{n}$ is a lower bound on the set of achievable $\zeta-J$-frame potentials among such collections $\textbf{\textit{F}}$, and that every collection attaining this lower bound is a $\zeta-J$-tight frame with the given $\zeta\in[\sqrt{2},2)$.

Let $\{f_i\}_{i=1}^M$ be a weakly normalized $\zeta-J$-frame for the Krein space $\textbf{\textit{K}}$. Then $\{f_i\}_{i\in{I_+}}$ is a frame for the Hilbert space $(M_+,[~.~])$. Similarly $\{f_i\}_{i\in{I_-}}$ is a frame for the Hilbert space $(M_-,-[~.~])$. Let $S_{G_+}$ is the frame operator for $\{f_i\}_{i\in{I_+}}$, where $G_+=G_{M_+}|_{M_+}\in{GL(M_+)}$. Let $\lambda_1,\ldots~,\lambda_m$ be the eigenvalues of $S_{G_+}$, counting multiplicity.

Then $\sum_{i\in{I_+}}\sum_{j\in{I_+}}|[f_i,f_j]|=\sum_{i\in{I_+}}\sum_{j\in{I_+}}|G_+(i,j)|^2=\|G_+\|_F^2=tr({G_+}^2)\\
=tr({S_{G_+}}^2)$, where $G_+$ is the Grammian operator of $\{f_i\}_{i\in{I_+}}$ in $(M_+,[~.~])$ and $\|.\|_F$ is the Frobenius norm.

Similarly let $S_{G_-}$ is the frame operator for $\{f_i\}_{i\in{I_-}}$, where $G_-=G_{M_-}|_{M_-}\in{GL(M_-)}$. Let $\mu_1,\ldots~,\mu_m$ be the eigenvalues of $S_{G_-}$, counting multiplicity. So $\sum_{i\in{I_-}}\sum_{j\in{I_-}}|[f_i,f_j]|=\sum_{i\in{I_-}}\sum_{j\in{I_-}}|G_-(i,j)|^2=\|G_-\|_F^2=tr({G_-}^2)=tr({S_{G_-}}^2)$, where $G_-$ is the Grammian operator of $\{f_i\}_{i\in{I_-}}$ in $(M_-,-[~.~])$ and $\|.\|_F$ is the Frobenius norm.

We have
\begin{equation*}
\begin{split}
		FP_J(\{f_i\}_{i=1}^M) & =tr({G_+}^2)+tr({G_-}^2)\\
		& =tr({S_{G_+}}^2)+tr({S_{G_-}}^2)\\
\end{split}
\end{equation*}

Now $tr(G_+)=tr(S_{G_+})=\sum_{i=1}^{p}[f_i,f_i]=p$. Similarly $tr(G_-)=tr(S_{G_-})=\sum_{i=1}^{q}-[f_i,f_i]=q$.
So, minimizing the frame potential under our constraint means minimizing $tr({G_+}^2)+tr({G_-}^2)=tr({S_{G_+}}^2)+tr({S_{G_-}}^2)=(\lambda_{1}^{2}+\ldots~+\lambda_{m}^{2})+(\mu_{1}^{2}+\ldots~+\mu_{n}^{2})$ under the constraint $tr(G_+)+tr(G_-)=tr(S_{G_+})+tr(S_{G_-})=(\lambda_{1}+\ldots~+\lambda_{m})+(\mu_{1}+\ldots~+\mu_{n})=p+q$, so we apply Lagrange multipliers theorem.
Here we have the following optimization problem :
\begin{equation}
\begin{split}		\textmd{Minimize~}f(\lambda_{1},\ldots~,\lambda_{m},~\mu_{1},\ldots~,\mu_{n})&=\lambda_{1}^{2}+\ldots~+\lambda_{m}^{2}+\mu_{1}^{2}+\ldots~+\mu_{n}^{2}.\\
\textmd{Subject~to~}g(\lambda_{1},\ldots~,\lambda_{m},~\mu_{1},\ldots~,\mu_{n})&=p+q.\\
\end{split}
\end{equation}
A careful observation of Lagrange multipliers theorem yields that to obtain the solution of $\nabla{f(\lambda_{1},\ldots~,\lambda_{m},~\mu_{1},\ldots~,\mu_{n})}
=\lambda{g(\lambda_{1},\ldots~,\lambda_{m},~\mu_{1},\ldots~,\mu_{n})}$ and $g(\lambda_{1},\ldots~,\lambda_{m},~\mu_{1},\ldots~,\mu_{n})=p+q$, we can split the optimization problem into the above two separated optimization problem \textit{i.e.} we will find the solution of $\nabla{f_1(\lambda_{1},\ldots~,\lambda_{m})}=\lambda{g_1(\lambda_{1},\ldots~,\lambda_{m})},~g_1(\lambda_{1},\ldots~,\lambda_{m})=p$ and $\nabla{f_2(\mu_{1},\ldots~,\mu_{n})}=\lambda{g_2(\mu_{1},\ldots~,\mu_{n})},~g_2(\mu_{1},\ldots~,\mu_{n})=q$. So to minimize the frame potential under our constraint means minimizing $tr({G_+}^2)=tr({S_{G_+}}^2)=\lambda_{1}^{2}+\ldots~+\lambda_{m}^{2}$ under the constraint $tr(G_+)=tr(S_{G_+})=\lambda_{1}+\ldots~+\lambda_{m}=p$, which implies that the minimizers satisfy $\lambda_{i}=\frac{p}{m}$ and minimizing $tr({G_-}^2)=tr({S_{G_-}}^2)=\mu_{1}^{2}+\ldots~+\mu_{n}^{2}$ under the constraint $tr(G_+)=tr(S_{G_+})=\mu_{1}+\ldots~+\mu_{n}=q$, we have that the minimizers satisfy $\mu_{i}=\frac{q}{n}$.

Therefore, the frame potential $FP_J$ has minimum value $\frac{p^2}{m}+\frac{q^2}{n}$ which is attained iff the eigenvalues are all equal in absolute value \textit{i.e.} $\lambda_i=\frac{p}{m}$ and $\mu_i=\frac{q}{n}$. So $S_{G_+}=\frac{p}{m}I_{M_+}$ and $S_{G_-}=\frac{q}{n}I_{M_-}$, hence $\textbf{\textit{F}}$ is a $J$-tight frame for $\textbf{\textit{K}}$.

Here the finite dimensional Krein space $\textbf{\textit{K}}$ is arbitrary but fixed after choice and so is the uniformly definite subspaces $M_+$ and $M_-$. So $\zeta=c_0(M_+,\verb"C")+c_0(M_-,\verb"C")$ is fixed throughout our proof.

Therefore, we have proved that the minimum of $J$-frame potential is attained exactly when the vectors form a weakly normalized $\zeta-J$-tight frame for $\textbf{\textit{K}}$.
\end{proof}

{\bf Acknowledgements.} Shibashis Karmakar, Sk. Monowar Hossein and Kallol Paul gratefully acknowledge the support of Jadavpur University, Kolkata and Aliah University, Kolkata for providing all the facilities when the manuscript was prepared. Shibashis Karmakar also acknowledges the financial support of CSIR, Govt. of India.


\end{document}